\documentclass[12pt]{amsart}
\usepackage{amscd, amsfonts, amssymb, amsthm}
\usepackage[all]{xypic}
\usepackage{graphics, lscape}
\usepackage{rotating}
\setlength{\textheight}{8.8in}
\setlength{\textwidth}{6.5in}
\setlength{\topmargin}{-0.25in}
\setlength{\oddsidemargin}{-.2in}
\setlength{\evensidemargin}{-.2in}

\renewenvironment{enumerate}
  {\begin{list}
  {(\thesubsection.\theenumi)}
  {\usecounter{enumi}\setlength{\topsep}{8pt plus 2pt}
     \setlength{\parsep}{5.16663pt plus 1pt}
     \setlength{\itemsep}{5.16663pt plus 1pt}
     \setlength{\labelwidth}{2.5em}
     \setlength{\labelsep}{0.5em}
     \setlength{\itemindent}{0pt}
   }}
  {\end{list}}


\newcommand\CB{{\mathcal B}}

\newcommand\CN{{\mathcal N}} 
\newcommand\CO{{\mathcal O}}

\newcommand\fb{{\mathfrak b}}
\newcommand\fg{{\mathfrak g}}

\newcommand\fu{{\mathfrak u}}

\newcommand\ft{{\mathfrak t}}

\newcommand\BBQ{{\mathbb Q}}
\newcommand\BBR{{\mathbb R}}


\newcommand\Coinv{\operatorname{Coinv}}
\newcommand\Ext{{\operatorname{Ext}}}
\newcommand\End{{\operatorname{End}}}

\newcommand\Lie{{\operatorname{Lie}}}

\newcommand\res{\operatorname{res}}

\newcommand\inverse{^{-1}}
\renewcommand\th{{^{\text{th}}}}

\newcommand\bi{{\operatorname{bi}}}
\newcommand\CNt{{\widetilde{\mathcal N}}}
\newcommand\fgt{{\widetilde{\fg}}}
\newcommand\fgrs{\fg_{\rs}}
\newcommand\fgrst{{\widetilde {\fg}_{\rs}}}
\newcommand\id{{id}}
\newcommand\muhat{{\widehat \mu}}
\newcommand\pd{{\operatorname{pd}}}
\newcommand\reg{{\operatorname{reg}}}
\newcommand\rs{{\operatorname{rs}}}
\newcommand\too{\longrightarrow}
\newcommand\wtilde{{\widetilde{w}}}
\newcommand\Zhat{{\widehat Z}}
\newcommand\Zwbar{{\overline{Z_w}}} 


\numberwithin{equation}{section}

\theoremstyle{plain}
\newtheorem{lemma}[equation]{Lemma}
\newtheorem{theorem}[equation]{Theorem}
\newtheorem{corollary}[equation]{Corollary}
\newtheorem{proposition}[equation]{Proposition}

\thanks{The authors would like to thank their charming wives for their
  unwavering support during the preparation of this paper}

\subjclass[2000]{Primary 20G05; Secondary 20F55}

\begin{document}

\title[Homology of the Steinberg Variety] {Homology of the Steinberg
  variety\\ and\\ Weyl group coinvariants}

\author[J.M. Douglass]{J. Matthew Douglass} \address{Department of
  Mathematics\\ University of North Texas\\ Denton TX, USA 76203}
\email{douglass@unt.edu} \urladdr{http://hilbert.math.unt.edu}

\author[G. R\"ohrle]{Gerhard R\"ohrle} \address
{Fakult\"at f\"ur Mathematik, Ruhr-Universit\"at Bochum, D-44780
  Bochum, Germany} \email{gerhard.roehrle@rub.de}
\urladdr{http://www.ruhr-uni-bochum.de/ffm/Lehrstuehle/Lehrstuhl-VI/rubroehrle.html}


\maketitle
\allowdisplaybreaks

\begin{abstract}
  Let $G$ be a complex, connected, reductive algebraic group with Weyl
  group $W$ and Steinberg variety $Z$. We show that the graded
  Borel-Moore homology of $Z$ is isomorphic to the smash product of
  the coinvariant algebra of $W$ and the group algebra of $W$.
\end{abstract}


\section{Introduction}

Suppose $G$ is a complex, reductive algebraic group, $\CB$ is the
variety of Borel subgroups of $G$. Let $\fg$ be the Lie algebra of $G$ and
$\CN$ the cone of nilpotent elements in $\fg$. Let $T^*\CB$ denote the
cotangent bundle of $\CB$. Then there is a \emph{moment map},
$\mu_0\colon T^*\CB\to \CN$. The \emph{Steinberg variety} of $G$ is
the fibered product $T^*\CB\times_\CN T^*\CB$ which we will identify
with the closed subvariety
\[
Z=\{\, (x, B', B'')\in \CN\times \CB\times \CB\mid x\in \Lie(B')\cap
\Lie(B'')\,\}
\]
of $\CN\times \CB\times \CB$.  Set $n= \dim \CB$. Then $Z$ is a
$2n$-dimensional, complex algebraic variety.

If $V= \oplus_{i\geq 0} V_i$ is a graded vector space, we will
frequently denote $V$ by $V_\bullet$. Similarly, if $X$ is a
topological space, then $H_i(X)$ denotes the $i\th$ rational
Borel-Moore homology of $X$ and $H_\bullet(X)=\oplus_{i\geq 0} H_i(X)$
denotes the total Borel-Moore homology of $X$.

Fix a maximal torus, $T$, of $G$, with Lie algebra $\ft$, and let
$W=N_G(T)/T$ be the Weyl group of $(G,T)$. In
\cite{kazhdanlusztig:topological} Kazhdan and Lusztig defined an
action of $W\times W$ on $H_\bullet(Z)$ and they showed that the
representation of $W\times W$ on the top-dimensional homology of $Z$,
$H_{4n}(Z)$, is equivalent to the two-sided regular representation of
$W$. Tanisaki \cite{tanisaki:twisted} and, more recently, Chriss and
Ginzburg \cite{chrissginzburg:representation} have strengthened the
connection between $H_\bullet(Z)$ and $W$ by defining a $\BBQ$-algebra
structure on $H_\bullet(Z)$ so that $H_i(Z) * H_j(Z) \subseteq
H_{i+j-4n}(Z)$. Chriss and Ginzburg
\cite[\S3.4]{chrissginzburg:representation} have also given an
elementary construction of an isomorphism between $H_{4n}(Z)$ and the
group algebra $\BBQ W$.

Let $Z_1$ denote the ``diagonal'' in $Z$:
\[
Z_1=\{\, (x, B', B')\in \CN\times \CB\times \CB\mid x\in \Lie(B')\,\}.
\]
In this paper we extend the results of Chriss and Ginzburg \cite[\S3.4]{chrissginzburg:representation} and show in Theorem \ref{mult} that for any $i$, the convolution product defines an isomorphism $H_i(Z_1) \otimes H_{4n}(Z) \xrightarrow{\sim} H_i(Z)$. It then follows easily that with the convolution product, $H_\bullet(Z)$ is isomorphic to the smash product of the coinvariant algebra of $W$ and the group algebra of $W$.

Precisely, for $0\leq i\leq n$ let $\Coinv_{2i}(W)$ denote the degree $i$ subspace of the rational coinvariant algebra of $W$, so $\Coinv_{2i}(W)$ may be identified with the space of degree $i$, $W$-harmonic polynomials on $\ft$. If $j$ is odd, define $\Coinv_{j}(W)=0$.  Recall that the smash product, $\Coinv(W) \# \BBQ W$, is the $\BBQ$-algebra whose underlying vector space is $\Coinv(W) \otimes_{\BBQ} \BBQ W$ with multiplication satisfying $(f_1\otimes \phi_1) \cdot (f_2\otimes \phi_2)= f_1 \phi_1(f_2) \otimes \phi_1 \phi_2$ where $f_1$ and $f_2$ are in $\Coinv(W)$, $\phi_1$ and $\phi_2$ are in $\BBQ W$, and $\BBQ W$ acts on $\Coinv(W)$ in the usual way. The algebra $\Coinv(W) \# \BBQ W$ is graded by $(\Coinv(W)\# \BBQ W)_i= \Coinv_i(W) \# \BBQ W$ and we will denote this graded algebra by $\Coinv_\bullet(W) \# \BBQ W$. In Theorem \ref{smashiso} we construct an explicit isomorphism of graded algebras $H_{4n-\bullet}(Z) \cong \Coinv_\bullet (W) \# \BBQ W$.

This paper was motivated by the observation, pointed out to the first author by Catharina Stroppel, that the argument in \cite[8.1.5]{chrissginzburg:representation} can be used to show that $H_\bullet(Z)$ is isomorphic to the smash product of $\BBQ W$ and $\Coinv_\bullet(W)$. The details of such an argument have been carried out in a recent preprint of Namhee Kwon \cite{kwon:borel}. This argument relies on some deep and technical results: the localization theorem in $K$-theory proved by Thomason \cite{thomason:formule}, the bivariant Riemann-Roch Theorem \cite[5.11.11]{chrissginzburg:representation}, and the Kazhdan-Lusztig isomorphism between the equivariant $K$-theory of $Z$ and the extended, affine, Hecke algebra \cite{kazhdanlusztig:langlands}. In contrast, and also in the spirit of Kazhdan and Lusztig's original analysis of $H_{4n}(Z)$, and the analysis of $H_{4n}(Z)$ in \cite[3.4]{chrissginzburg:representation}, our argument uses more elementary notions and is accessible to readers who are not experts in equivariant $K$-theory and to readers who are not experts in the representation theory of reductive, algebraic groups.

Another approach to the Borel-Moore homology of the Steinberg variety
uses intersection homology. Let $\mu\colon Z\to \CN$ be projection on
the first factor.  Then, as in
\cite[\S8.6]{chrissginzburg:representation}, $H_\bullet(Z) \cong
\operatorname{Ext} _{D(\CN)} ^{4n-\bullet}\left(R\mu_* \BBQ_\CN,
  R\mu_* \BBQ_\CN \right)$. The Decomposition Theorem of Beilinson,
Bernstein, and Deligne can be used to decompose $R\mu_*\BBQ_\CN$ into
a direct sum of simple perverse sheaves $R\mu_*\BBQ_\CN \cong
\oplus_{x, \phi} \operatorname{IC}_{x,\phi} ^{n_{x, \phi}}$ where $x$
runs over a set of orbit representatives in $\CN$, for each $x$,
$\phi$ runs over a set of irreducible representations of the component
group of $Z_G(x)$, and $\operatorname{IC}_{x,\phi}$ denotes an
intersection complex (see \cite{borhomacpherson:weyl} or
\cite[\S4,5]{shoji:geometry}). Chriss and Ginzburg have used this
construction to describe an isomorphism $H_{4n}(Z) \cong \BBQ W$ and
to in addition give a description of the projective, indecomposable
$H_\bullet(Z)$-modules.

It follows from Theorem \ref{mult} that $H_i(Z)\cong \Coinv_{4n-i}(W)
\otimes H_{4n}(Z)$ and so
\begin{equation}
  \label{eq:ic}
  \Coinv_i(W) \otimes H_{4n}(Z) \cong \operatorname{Ext} _{D(\CN)}
  ^{4n-i}\left(R\mu_* \BBQ_\CN, R\mu_* \BBQ_\CN \right) \cong
  \bigoplus_{x,\phi} \bigoplus_{y, \psi}  \Ext^{4n-i}_{D(\CN)} \left(
    \operatorname{IC}_{x,\phi} ^{n_{x, \phi}},
    \operatorname{IC}_{y,\psi} ^{n_{y, \psi}} \right). 
\end{equation}
In the special case when $i=0$ we have that
\[
\Coinv_0(W) \otimes H_{4n}(Z) \cong \operatorname{End} _{D(\CN)}
\left(R\mu_* \BBQ_\CN\right) \cong \bigoplus_{x,\phi} \End_{D(\CN)}
\left( \operatorname{IC}_{x,\phi} ^{n_{x, \phi}}\right).
\]
The image of the one-dimensional vector space $\Coinv_0(W)$ in
$\operatorname{End} _{D(\CN)} \left(R\mu_* \BBQ_\CN\right)$ is the
line through the identity endomorphism and $\BBQ W\cong H_{4n}(Z)
\cong \oplus_{x,\phi} \operatorname{End} _{D(\CN)}\left(
  \operatorname{IC}_{x,\phi} ^{n_{x,\phi}} \right)$ is the Wedderburn
decomposition of $\BBQ W$ as a direct sum of minimal two-sided ideals.
For $i<4n$ we have not been able to find a nice description of the
image of $\Coinv_i(W)$ in the right-hand side of (\ref{eq:ic}).

The rest of this paper is organized as follows: in \S2 we set up our
notation and state the main results; in \S3 we construct an
isomorphism of graded vector spaces between $\Coinv_\bullet(W) \otimes
\BBQ W$ and $H_{4n-\bullet}(Z)$; and in \S4 we complete the proof that
this isomorphism is in fact an algebra isomorphism when
$\Coinv_\bullet(W) \otimes \BBQ W$ is given the smash product
multiplication. Some very general results about graphs and convolution
that we need for the proofs of the main theorems are proved in an
appendix.

In this paper $\otimes= \otimes_{\BBQ}$, if $X$ is a set, then
$\delta_X$, or just $\delta$, will denote the diagonal embedding of $X$
in $X\times X$, and for $g$ in $G$ and $x$ in $\fg$, $g\cdot x$
denotes the adjoint action of $g$ on $x$.

\section{Preliminaries and Statement of Results}

Fix a Borel subgroup, $B$, of $G$ with $T\subseteq B$ and define $U$
to be the unipotent radical of $B$. We will denote the Lie algebras of
$B$ and $U$ by $\fb$ and $\fu$ respectively.

Our proof that $H_\bullet(Z)$ is isomorphic to $\Coinv_\bullet(W)\#
\BBQ W$ makes use of the specialization construction used by Chriss
and Ginzburg in \cite[\S3.4]{chrissginzburg:representation} to
establish the isomorphism between $H_{4n}(Z)$ and $\BBQ W$. We begin
by reviewing their construction.

The group $G$ acts diagonally on $\CB \times \CB$. Let $\CO_w$ denote
the orbit containing $(B, wBw\inverse)$. Then the rule $w\mapsto
\CO_w$ defines a bijection between $W$ and the set of $G$-orbits in
$\CB \times \CB$.

Let $\pi_Z\colon Z\to \CB \times \CB$ denote the projection on the
second and third factors and for $w$ in $W$ define $Z_w=
\pi_Z\inverse(\CO_w)$.  For $w$ in $W$ we also set $\fu_w= \fu\cap
w\cdot \fu$.  The following facts are well-known (see
\cite{steinberg:desingularization} and \cite[\S1.1]{shoji:geometry}):
\begin{itemize}
\item $Z_w \cong G\times^{B\cap {}^wB} \fu_w$.
\item $\dim Z_w= 2n$.
\item The set $\{\, \Zwbar\mid w\in W\,\}$ is the set of irreducible
  components of $Z$.
\end{itemize}

Define 
\begin{align*}
  \fgt  & = \{\,(x, B')\in \fg\times \CB\mid x\in \Lie(B')\,\},\\
  \CNt  & =\{\,(x, B')\in \CN\times \CB\mid x\in \Lie(B')\,\},\text{
    and}\\ 
  \Zhat & =\{\, (x, B', B'')\in \fg\times \CB \times \CB \mid x\in
  \Lie(B') \cap \Lie(B'')\,\},
\end{align*}
and let $\mu\colon \fgt\to \fg$ denote the projection on the first
factor.  Then $\CNt\cong T^*\CB$, $\mu(\CNt)= \CN$, $Z\cong \CNt\times
_{\CN} \CNt$, and $\Zhat \cong \fgt\times_{\fg} \fgt$.

Let $\hat\pi\colon \Zhat\to \CB \times \CB$ denote the projection on
the second and third factors and for $w$ in $W$ define $\Zhat_w=
\hat\pi\inverse(\CO_w)$. Then it is well-known that $\dim \Zhat_w=
\dim \fg$ and that the closures of the $\Zhat_w$'s for $w$ in $W$ are
the irreducible components of $\Zhat$ (see
\cite[\S1.1]{shoji:geometry}).

Next, for $(x, gBg\inverse)$ in $\fgt$, define $\nu (x,gBg\inverse)$
to be the projection of $g\inverse\cdot x$ in $\ft$. Then $\mu$ and
$\nu$ are two of the maps in Grothendieck's simultaneous resolution:
\[
\xymatrix{ \fgt \ar[r]^{\mu} \ar[d]_{\nu} & \fg \ar[d] \\ \ft \ar[r] &
  \ft/W}
\]

It is easily seen that if $\muhat\colon \Zhat\to \fg$ is the
projection on the first factor, then the square
\[
\xymatrix{ \Zhat \ar[r]^{\muhat} \ar[d] & \fg \ar[d]^{\delta_{\fg}} \\
  \fgt \times \fgt \ar[r]_{\mu\times \mu}& \fg\times \fg}
\]
is cartesian, where the vertical map on the left is given by $(x, B',
B'')\mapsto ((x, B'), (x, B''))$. We will frequently identify $\Zhat$
with the subvariety of $\fgt\times \fgt$ consisting of all pairs
$((x,B'), (x,B''))$ with $x$ in $\Lie(B') \cap \Lie(B'')$.

For $w$ in $W$, let $\Gamma_{w\inverse}= \{\, (h, w\inverse \cdot
h)\mid h\in \ft\,\} \subseteq \ft \times \ft$ denote the graph of the
action of $w\inverse$ on $\ft$ and define
\[
\Lambda_w= \Zhat \cap (\nu\times \nu)\inverse \left(
  \Gamma_{w\inverse} \right) =\{\, (x, B', B'')\in \Zhat \mid
\nu(x,B'')= w\inverse \nu(x,B')\,\}.
\]
In the special case when $w$ is the identity element in $W$, we will
denote $\Lambda_w$ by $\Lambda_1$.

The spaces we have defined so far fit into a commutative diagram with
cartesian squares:
\begin{equation}
  \label{eq:nu}
  \xymatrix{
    \Lambda_w \ar[r] \ar[d] & \Zhat \ar[r]^{\muhat} \ar[d] & \fg
    \ar[d]^{\delta_{\fg}} \\   
    (\nu\times \nu)\inverse \left(\Gamma_{w\inverse} \right) \ar[r]
    \ar[d]  & \fgt \times \fgt \ar[r]_{\mu\times \mu} \ar[d]_{\nu\times
      \nu} & \fg \times \fg \\ 
    \Gamma_{w\inverse} \ar[r] & \ft\times \ft&}  
\end{equation}
Let $\nu_w\colon \Lambda_w\to \Gamma_{w\inverse}$ denote the
composition of the leftmost vertical maps in (\ref{eq:nu}), so $\nu_w$
is the restriction of $\nu\times \nu$ to $\Lambda_w$.

For the specialization construction, we consider subsets of $\Zhat$ of
the form $\nu_w\inverse(S')$ for $S'\subseteq \Gamma_{w\inverse}$.
Thus, for $h$ in $\ft$ we define $\Lambda_w^h= \nu_w\inverse(h,
w\inverse h)$. Notice in particular that $\Lambda_w^0= Z$. More
generally, for a subset $S$ of $\ft$ we define $\Lambda_w^{S}=\coprod
_{h\in S} \Lambda_w^h$. Then, $\Lambda_w^{S}= \nu_w\inverse(S')$,
where $S'$ is the graph of $w\inverse$ restricted to $S$.

Let $\ft_\reg$ denote the set of regular elements in $\ft$.

Fix a one-dimensional subspace, $\ell$, of $\ft$ so that $\ell\cap
\ft_\reg= \ell\setminus \{0\}$ and set $\ell^*= \ell\setminus \{0\}$.
Then $\Lambda_w^{\ell}= \Lambda_w^{\ell^*} \coprod \Lambda_w^0
=\Lambda_w^{\ell^*} \coprod Z$. We will see in Corollary \ref{fib}
that the restriction of $\nu_w$ to $\Lambda_w^{\ell^*}$ is a locally
trivial fibration with fibre $G/T$. Thus, using a construction due to
Fulton and MacPherson (\cite[\S3.4]{fultonmacpherson:categorical},
\cite[\S2.6.30] {chrissginzburg:representation}), there is a
specialization map
\[ 
\lim\colon H_{\bullet+2}( \Lambda_w^{\ell^*}) \too H_{\bullet}(Z).
\]

Since $\Lambda_w^{\ell^*}$ is an irreducible, $(2n+1)$-dimensional
variety, if $[\Lambda_w^{\ell^*}]$ denotes the fundamental class of
$\Lambda_w^{\ell^*}$, then $H_{4n+2}( \Lambda_w^{ \ell^*})$ is
one-dimensional with basis $\{ [\Lambda_w^{\ell^*}] \}$. Define
$\lambda_w= \lim ([\Lambda_w^{\ell^*}])$ in $H_{4n}(Z)$. Chriss and
Ginzburg \cite[\S3.4]{chrissginzburg:representation} have proved the
following theorem.

\begin{theorem}\label{cg}
  Consider $H_\bullet(Z)$ endowed with the convolution product.
  \begin{enumerate}
  \item[(a)] For $0\leq i,j\leq 4n$, $H_i(Z) \,*\, H_j(Z) \subseteq
    H_{i+j-4n}(Z)$. In particular, $H_{4n}(Z)$ is a subalgebra of
    $H_\bullet(Z)$.
  \item[(b)] The element $\lambda_w$ in $H_{4n}(Z)$ does not depend on
    the choice of $\ell$.
  \item[(c)] The assignment $w\mapsto \lambda_w$ extends to an algebra
    isomorphism $\alpha\colon \BBQ W \xrightarrow{\,\cong\,} H_{4n}(Z)$.
  \end{enumerate}  
\end{theorem}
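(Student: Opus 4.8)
The three parts of the theorem are of quite different character, and I would attack them in order, since each builds on the geometric setup already in place.

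For part (a), the convolution product on $H_\bullet(Z)$ is defined via the composition $Z_{12}\times Z_{23}\to Z_{13}$ inside $(T^*\CB)^3$, and the degree shift is governed by the codimension of the relevant diagonal. Concretely, writing $Z = \CNt\times_\CN\CNt$, the convolution $H_i(Z)\otimes H_j(Z)\to H_{i+j-4n}(Z)$ comes from the general formalism of convolution in Borel--Moore homology (as in \cite[\S2.7]{chrissginzburg:representation}): if $Z\subseteq M_1\times M_2$ and $Z\subseteq M_2\times M_3$ are closed in smooth ambient varieties, then $*\colon H_i(Z_{12})\otimes H_j(Z_{23})\to H_{i+j-2\dim M_2}(Z_{13})$, and here each $M_k = T^*\CB$ has dimension $2n$. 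So the shift is $i+j-4n$, which proves (a); the fact that $H_{4n}(Z)$ is then closed under $*$ and contains a unit (the class $[Z_1]$ of the diagonal) is immediate. I do not expect any obstacle here — it is bookkeeping with the standard convolution formalism.

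For part (b), independence of $\lambda_w$ from the choice of the line $\ell$, the natural approach is a connectedness/monodromy argument: the set of admissible lines $\ell$ (those with $\ell\cap\ft_\reg = \ell\setminus\{0\}$) is a Zariski-open, hence connected, subset of $\mathbb{P}(\ft)$, and one shows that the specialization $\lim([\Lambda_w^{\ell^*}])$ varies continuously — indeed locally constantly — with $\ell$. More precisely, one can form a family over this parameter space by taking the union of the $\Lambda_w^{\ell^*}$, apply the specialization construction of Fulton--MacPherson in this relative setting, and observe that $H_{4n}(Z)$ being discrete forces the resulting section to be constant. Alternatively, and perhaps more cleanly, one checks that $\lambda_w$ can be characterized intrinsically — for instance as the unique class in $H_{4n}(Z)$ restricting to the fundamental class $[\Zwbar]$ under an appropriate restriction map to an open piece — and such a characterization is manifestly $\ell$-independent. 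This is the step I expect to require the most care, since it rests on functoriality properties of the specialization map that must be verified in families.

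For part (c), that $w\mapsto\lambda_w$ extends to an algebra isomorphism $\BBQ W\xrightarrow{\cong} H_{4n}(Z)$, I would argue as follows. First, $\{\lambda_w\mid w\in W\}$ is a basis of $H_{4n}(Z)$: since the $\Zwbar$ are the irreducible components of $Z$, the classes $[\Zwbar]$ form a basis of $H_{4n}(Z)$, and one shows $\lambda_w = [\Zwbar] + (\text{lower terms in a suitable order on }W)$, so the $\lambda_w$ are a unitriangular change of basis. Second, one verifies the multiplicativity $\lambda_v * \lambda_w = \lambda_{vw}$. The key point is that convolution commutes with specialization: both $\Lambda_v^{\ell^*}$ and $\Lambda_w^{\ell^*}$ fiber over $\ell^*$, and over a generic (regular) parameter $h\in\ell^*$ the fibers $\Lambda_v^h$, $\Lambda_w^h$ lie over the \emph{regular semisimple} locus, where the map $\mu$ is a $W$-covering; there the convolution of the fundamental classes $[\Lambda_v^h]*[\Lambda_w^h]$ is simply $[\Lambda_{vw}^h]$ by the elementary geometry of graphs of the $W$-action (this is where the appendix results on graphs and convolution enter). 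Passing to the limit $h\to 0$ and using that $\lim$ intertwines the two convolution products gives $\lambda_v*\lambda_w = \lambda_{vw}$. Finally, $\lambda_1 = [Z_1]$ is the unit, so $\alpha$ is a unital algebra homomorphism that is bijective on the level of vector spaces, hence an isomorphism. The substantive point here is the compatibility of $*$ with $\lim$, which again reduces to the relative convolution formalism; the generic computation itself is a clean statement about graphs of permutations of $\ft$.
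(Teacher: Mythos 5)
The paper does not give its own proof of Theorem~\ref{cg}: immediately before the statement the authors write that Chriss and Ginzburg \cite[\S3.4]{chrissginzburg:representation} have proved it, and the theorem is simply imported from that reference. So there is no in-paper argument to compare against, and your proposal must be judged against the cited source rather than against text in this paper.

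With that caveat, your sketch is broadly the right shape and does track the Chriss--Ginzburg argument, but a few points are either rough or subtly off. Part~(a) is correct and is indeed pure bookkeeping with the convolution degree formula for $M_1=M_2=M_3=T^*\CB$ of complex dimension $2n$. For part~(b), the connectedness-of-lines idea is in the right spirit, but your proposed ``cleaner'' alternative --- characterizing $\lambda_w$ as the unique class restricting to $[\Zwbar]$ on an open piece --- does not work as stated: the restriction map $H_{4n}(Z)\to H_{4n}(Z_w)$ has a large kernel, so a single such restriction does not pin $\lambda_w$ down; one needs the whole filtration by the $Z_j$'s and a triangularity statement, which is closer to the actual argument. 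For part~(c), the multiplicativity argument via graphs and specialization is the right idea, but you have the two graphs fibering ``over $\ell^*$''; for the composition $\Lambda_v^{\,?}\circ\Lambda_w^{\,?}$ to be a graph composition, $\Lambda_v^{\ell^*}$ (the graph of $\wtilde$ from $\fgt^{\ell^*}$ to $\fgt^{v\inverse(\ell^*)}$) must be composed with $\Lambda_w^{v\inverse(\ell^*)}$, not $\Lambda_w^{\ell^*}$; you then need part~(b) to conclude $\lim\bigl([\Lambda_w^{v\inverse(\ell^*)}]\bigr)=\lambda_w$, so (c) genuinely depends on (b) in a way your write-up elides. Finally, the unitriangularity claim $\lambda_w=[\Zwbar]+\text{(lower)}$ is the right heuristic but should be phrased in terms of the open-closed filtration $Z_1\subseteq Z_2\subseteq\dotsm\subseteq Z_N=Z$ and the restriction maps $\res_j$, as in \cite[\S3.4]{chrissginzburg:representation}, rather than as an expansion in the basis $\{[\Zwbar]\}$, since $\lambda_w$ and $[\Zwbar]$ live in different filtration pieces and the comparison requires the vanishing of the connecting maps $\partial$. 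None of these issues is fatal, but as written the proposal has genuine gaps at the points where the Chriss--Ginzburg proof does real work.
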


Now consider
\[
Z_1 =\{\, (x, B', B')\in \CN\times \CB\times \CB\mid x\in
\Lie(B')\,\}.
\]
Then $Z_1$ may be identified with the diagonal in $\CNt\times \CNt$.
It follows that $Z_1$ is closed in $Z$ and isomorphic to $\CNt$.

Since $\CNt \cong T^*\CB$, it follows from the Thom isomorphism in
Borel-Moore homology \cite[\S2.6]{chrissginzburg:representation} that
$H_{i+2n}(Z_1) \cong H_{i}(\CB)$ for all $i$. Since $\CB$ is smooth
and compact, $H_{i}(\CB)\cong H^{2n-i}(\CB)$ by Poincar\'e duality.
Therefore, $H_{4n-i}(Z_1) \cong H^i(\CB)$ for all $i$.

The cohomology of $\CB$ is well-understood: there is an isomorphism of
graded algebras, $H^{\bullet}(\CB)\cong \Coinv_\bullet(W)$.  It
follows that $H_j(Z) = 0$ if $j$ is odd and $H_{4n-2i}(Z_1)\cong
\Coinv_{2i}(W)$ for $0\leq i\leq n$.

In \S3 below we will prove the following theorem.

\begin{theorem}\label{mult}
  Consider the Borel-Moore homology of the variety $Z_1$.
  \begin{enumerate}
  \item[(a)] There is a convolution product on $H_\bullet(Z_1)$. With
    this product, $H_{\bullet}(Z_1)$ is a commutative $\BBQ$-algebra
    and there is an isomorphism of graded $\BBQ$-algebras
    \[
    \beta\colon \Coinv_\bullet(W) \xrightarrow{\ \cong\ }
      H_{4n-\bullet}(Z_1).
    \]
  \item[(b)] If $r\colon Z_1\to Z$ denotes the inclusion, then the
    direct image map in Borel-Moore homology, $r_*\colon
    H_\bullet(Z_1) \too H_\bullet(Z)$, is an injective ring
    homomorphism.
  \item[(c)] If we identify $H_\bullet(Z_1)$ with its image in
    $H_\bullet(Z)$ as in (b), then the linear transformation given by
    the convolution product
    \[
    H_i(Z_1)\otimes H_{4n}(Z) \xrightarrow{\,\ *\ \,}  H_i(Z)
    \]
    is an isomorphism of vector spaces for $0\leq i\leq 4n$.
  \end{enumerate}
\end{theorem}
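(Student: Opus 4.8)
The plan is to treat the three parts in order, using the convolution formalism and the specialization construction already set up. For part (a), the key point is that $Z_1 \cong \CNt \cong T^*\CB$ and $Z_1$ is the diagonal in $\CNt \times \CNt$, so the convolution product on $H_\bullet(Z_1)$ is defined exactly as for $H_\bullet(Z)$ but lives on a self-intersection that is as transverse as possible: intersecting the diagonal with itself inside $\CNt \times \CNt \times \CNt$ recovers the diagonal, and the excess intersection is governed by the (shifted) tangent bundle of $\CNt$. Concretely, I would identify the convolution product with the cup product on $H^\bullet(\CNt) \cong H^\bullet(\CB)$ under the chain of isomorphisms $H_{4n-i}(Z_1) \cong H_{i}(\CB)$ (Thom) $\cong H^{2n-i}(\CB)$ (Poincar\'e duality) that is already recorded in the excerpt; commutativity of $H_\bullet(Z_1)$ then follows from commutativity of cup product, and the graded algebra isomorphism $\beta$ is the composite of these with the classical $H^\bullet(\CB) \cong \Coinv_\bullet(W)$. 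The compatibility of convolution with cup product under the Thom/Poincar\'e identification is a standard fact (it appears in Chriss--Ginzburg for $T^*\CB$ acting on itself), so this part should be essentially bookkeeping.

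For part (b), I would first check that $r_* \colon H_\bullet(Z_1) \to H_\bullet(Z)$ is a ring homomorphism. This is a general property of direct image under a closed embedding that is compatible with the convolution diagrams: the product on $H_\bullet(Z)$ is defined via $p_{13*}(p_{12}^* \cap p_{23}^*)$ on $\CNt^{\times 3}$, and $Z_1 \times_\CNt Z_1 = Z_1$ inside $Z \times_\CNt Z$, so pushing forward classes supported on $Z_1$ and convolving gives the same answer whether one convolves first in $H_\bullet(Z_1)$ and then pushes forward, or pushes forward and convolves in $H_\bullet(Z)$ — this is functoriality of proper pushforward together with base change in the relevant cartesian squares. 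Injectivity of $r_*$ is the substantive claim; I expect to deduce it from part (c) rather than prove it directly. Indeed, once (c) is known, $r_*$ restricted to $H_i(Z_1)$ is the map $f \mapsto f * \lambda_1 = r_*(f) * \alpha(1) = r_*(f)$ (since $\lambda_1 = \alpha(1)$ is the identity of $H_{4n}(Z)$ — this needs the observation that the unit of the convolution algebra $H_\bullet(Z)$ is the fundamental class of $Z_1$, equivalently $\lambda_1$, which follows from Theorem \ref{cg}(c) and the fact that $Z_1$ is the diagonal), and the isomorphism statement in (c) forces $r_*$ to be injective on each graded piece.

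The heart of the matter is part (c), and that is where I expect the main obstacle. The strategy mirrors the Chriss--Ginzburg proof that $\alpha$ is an isomorphism: use the specialization map $\lim \colon H_{\bullet+2}(\Lambda_w^{\ell^*}) \to H_\bullet(Z)$ and the geometry of the family $\Lambda_w \to \Gamma_{w^{-1}}$. The idea is that for $h \in \ell^*$ the fibre $\Lambda_w^h$ is (up to the $G/T$-fibration of Corollary \ref{fib}) a translate of the diagonal-type locus, so that specializing a product of a fixed class on $Z_1$ with $\lambda_w$ computes $r_*(\cdot) * \lambda_w$; combined with the fact that the $\lambda_w$ span $H_{4n}(Z) \cong \BBQ W$ and that $r_*(H_\bullet(Z_1)) * \lambda_w$ sweeps out all of $H_\bullet(Z)$ by a dimension count (the left-hand side has total dimension $\dim \Coinv(W) \cdot |W| = \sum_w \dim H^\bullet(\CB)$, matching $\dim H_\bullet(Z)$ component-by-component via the decomposition of $H_\bullet(Z)$ into the $\Zwbar$ components). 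The technical work — and the likely sticking point — is proving that $*$ is compatible with $\lim$ in the needed form, i.e. that convolving on $Z_1$ first and then specializing the family agrees with specializing and then convolving on $Z$; this requires a careful analysis of the relevant cartesian diagrams relating $\Lambda_w$, $\Lambda_1$, and the triple products, and is precisely what the general "results about graphs and convolution" promised for the appendix are designed to supply. Once that compatibility is in place, injectivity plus the matching of graded dimensions upgrades the map to an isomorphism, completing (c) and retroactively (b).
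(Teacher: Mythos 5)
Your outline for part (a) matches the paper's in essentials (the chain $\Coinv_\bullet(W)\xrightarrow{\bi}H^\bullet(\CB)\xrightarrow{\pi^*}H^\bullet(\CNt)\xrightarrow{\pd}H_{4n-\bullet}(\CNt)\xrightarrow{\delta_*}H_{4n-\bullet}(Z_1)$, with convolution on the diagonal identified with the intersection product and hence with cup product), and your justification for $r_*$ being a ring homomorphism is the standard base-change argument (the paper cites \cite[Lemma 5.2.23]{chrissginzburg:representation}). So far so good.

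The genuine gap is in how you handle the injectivity of $r_*$ and part (c), and it is a circularity problem. You propose to \emph{defer} injectivity of $r_*$ to part (c) and then recover it from the isomorphism there. But your own sketch of (c) invokes ``a dimension count\,\dots matching $\dim H_\bullet(Z)$ component-by-component via the decomposition of $H_\bullet(Z)$ into the $\Zwbar$ components.'' The component-wise matching is exactly the statement that the long exact sequences for the partitions $Z_j = Z_{j-1}\coprod Z_{w_j}$ break into short exact sequences, i.e.\ that the connecting homomorphisms vanish and the direct-image maps $r_j$ (and hence $r_*$) are injective --- which is precisely the content of part (b). In the paper this vanishing is cited from \cite[\S6.2]{chrissginzburg:representation}, and the resulting flag $0\subseteq H_i(Z_1)\subseteq\dotsm\subseteq H_i(Z)$ is an essential input to the Five-Lemma induction (Lemma \ref{surjj}) that proves surjectivity of the convolution map. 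You cannot both use that decomposition and derive it after the fact.

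A second, related weakness is that a dimension count alone does not give surjectivity: $\dim(\text{source})=\dim(\text{target})$ upgrades a surjection to an isomorphism, but the surjectivity itself is the hard part. The paper establishes it by an induction along the filtration $Z_1\subseteq Z_2\subseteq\dotsm\subseteq Z_N=Z$, combining (i) the factorization $\lim = r_*\circ\lim_j$, (ii) surjectivity of $\res_j\circ\lim_j$ onto $H_i(Z_{w_j})$ via an explicit homotopy argument showing $H_\bullet(E_I)=0$, and (iii) injectivity of the convolution $H_{i+2}(\Lambda_1^{\ell^*})\otimes H_{4n+2}(\Lambda_w^{\ell^*})\to H_{i+2}(\Lambda_w^{\ell^*})$ (Proposition \ref{propg1} in the appendix, which is about graphs, not just about compatibility of $\lim$ with convolution). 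Your proposal identifies the compatibility of specialization with convolution as ``the likely sticking point,'' but that is a single citation (\cite[Proposition 2.7.23]{chrissginzburg:representation}); the real work is (ii) and (iii), neither of which appears in your sketch. To repair the argument: prove (b) independently by citing the vanishing of connecting homomorphisms, and then prove surjectivity in (c) by the filtration induction rather than by a dimension count.
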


The algebra $\Coinv_\bullet(W)$ has a natural action of $W$ by algebra
automorphisms, and the isomorphism $\beta$ in Theorem \ref{mult}(a) is
in fact an isomorphism of $W$-algebras. The $W$-algebra structure on
$H_\bullet(Z_1)$ is described in the next theorem, which will be
proved in \S4.

\begin{theorem}\label{conj}
  If $w$ is in $W$ and $H_\bullet(Z_1)$ is identified with its image
  in $H_\bullet(Z)$, then
  \begin{equation*}
    \lambda_w*H_i(Z_1) *\lambda_{w\inverse} = H_i(Z_1).
  \end{equation*}
  Thus, conjugation by $\lambda_w$ defines a $W$-algebra structure on
  $H_\bullet(Z_1)$. With this $W$-algebra structure, the isomorphism
  $\beta\colon \Coinv_\bullet(W) \xrightarrow{\ \cong\ } H_{4n-
    \bullet} (Z_1)$ in Theorem \ref{mult}(a) is an isomorphism of
  $W$-algebras.
\end{theorem}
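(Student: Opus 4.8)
The plan is to prove the two assertions in sequence: first that conjugation by $\lambda_w$ preserves the subspace $H_i(Z_1)$ of $H_i(Z)$, and second that the resulting $W$-action on $H_\bullet(Z_1)$, transported via $\beta$, agrees with the standard $W$-action on $\Coinv_\bullet(W)$. For the first assertion I would work one degree at a time and combine Theorem \ref{cg}(a) with the dimension count already established in the excerpt. Since $\lambda_w \in H_{4n}(Z)$ and $H_i(Z_1) = r_*H_i(Z_1) \subseteq H_i(Z)$, the triple convolution product $\lambda_w * H_i(Z_1) * \lambda_{w\inverse}$ lands in $H_{4n} * H_i(Z) * H_{4n} \subseteq H_i(Z)$ by the grading rule $H_a * H_b \subseteq H_{a+b-4n}$. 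So the containment $\lambda_w * H_i(Z_1)*\lambda_{w\inverse} \subseteq H_i(Z)$ is automatic; the real content is that it lands back inside the subspace $H_i(Z_1)$. For this I would use the geometric description of $\lambda_w$ as a specialization of the fundamental class $[\Lambda_w^{\ell^*}]$ together with the fact that $Z_1$ is the diagonal in $\CNt \times \CNt$: convolution with $\lambda_w$ implements (up to specialization) the correspondence attached to the graph $\Gamma_{w\inverse}$, and the diagonal $Z_1$ is stable under the two-sided action because conjugating the diagonal correspondence by a graph-of-$w$ correspondence on each side returns a diagonal correspondence. Concretely, $\lambda_w$ acts invertibly on $H_\bullet(Z)$ with inverse $\lambda_{w\inverse}$ (this is immediate from Theorem \ref{cg}(c) since $\alpha$ is an algebra isomorphism and $\lambda_w\lambda_{w\inverse} = \lambda_1 = \alpha(1)$ is the unit), so conjugation by $\lambda_w$ is an automorphism of the algebra $H_\bullet(Z)$; it therefore suffices to show it maps $H_i(Z_1)$ into $H_i(Z_1)$, as injectivity and surjectivity onto $H_i(Z_1)$ then follow by applying the same statement to $w\inverse$.

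To pin down that $\lambda_w * H_i(Z_1) * \lambda_{w\inverse} \subseteq H_i(Z_1)$ I would give an explicit computation on the canonical generators. By Theorem \ref{mult}(a) the algebra $H_{4n-\bullet}(Z_1) \cong \Coinv_\bullet(W)$ is generated in degree $2$, i.e. $H_{4n-2}(Z_1)$ generates $H_\bullet(Z_1)$ as a ring, so it is enough to show $\lambda_w * H_{4n-2}(Z_1) * \lambda_{w\inverse} \subseteq H_{4n-2}(Z_1)$; the general case then follows because conjugation by $\lambda_w$ is a ring homomorphism. Since $H_{4n-2}(Z_1) \cong H^2(\CB) \cong \ft^*$ (or rather $\Coinv_2(W)$), this is a computation in the degree-$2$ part, where one can use the explicit identification of $H^2(\CB)$ with characters and the known formula for how the Kazhdan--Lusztig/Chriss--Ginzburg $W\times W$-action, or equivalently convolution by the $\lambda_w$, permutes these classes. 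The key input here is the compatibility, already implicit in Grothendieck's simultaneous resolution diagram in the excerpt, between the $W$-action coming from $\nu$ and the $W$-action on $\ft$; conjugation by $\lambda_w$ corresponds to the diagonal $w$-twist on $\Gamma_{1} \subseteq \ft \times \ft$, which sends it to $\Gamma_1$ again but with the identification of the two copies of $\ft$ twisted by $w$. This is exactly the standard $W$-action on $\Coinv_\bullet(W) \cong H^\bullet(\CB)$.

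For the second assertion — that $\beta$ intertwines this conjugation action with the natural $W$-action on $\Coinv_\bullet(W)$ — I would again reduce to degree $2$. Both $W$-actions are by graded algebra automorphisms (conjugation by $\lambda_w$ because it is a ring automorphism preserving the grading, as established above; the natural action on $\Coinv(W)$ by definition), and $\beta$ is a graded algebra isomorphism by Theorem \ref{mult}(a), so it suffices to check that $\beta$ is $W$-equivariant on $\Coinv_2(W) = H_{4n-2}(Z_1)$. On this degree-$2$ piece everything is linear and can be matched against the classical statement that $H^2(\CB)$, with its $W$-action, is the reflection representation $\ft^*$ (more precisely the quotient giving $\Coinv_2(W)$), and that the Chriss--Ginzburg specialization classes $\lambda_w$ act on $H^\bullet(\CB)$ through precisely this $W$-action. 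I expect the main obstacle to be making the geometric identification in the previous paragraph fully rigorous — that is, tracking the $W$-twist through the specialization map $\lim$ and the convolution product carefully enough to see that conjugation by $\lambda_w$ on the diagonal piece $H_\bullet(Z_1)$ really is the geometric $W$-action on $H_\bullet(\CNt) = H_\bullet(T^*\CB)$ induced by the classical $W$-action on $H^\bullet(\CB)$, rather than some a priori unrelated automorphism. This will likely require the general lemmas on graphs and convolution promised for the appendix, applied to the cartesian diagram \eqref{eq:nu} with $S = \ell$ and the specialization taken as $\ell^* \to 0$.
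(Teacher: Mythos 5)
Your proposal has the right architecture --- first show conjugation by $\lambda_w$ preserves $H_\bullet(Z_1)$, then show $\beta$ intertwines --- and your reduction to degree~$2$ via ring-generation of $\Coinv_\bullet(W)$ is a valid alternative framing that the paper does not use. The genuine gap is that the reduction does not reach the substance: what you defer to ``known formulas'' is exactly the content that has to be proved. The crux is the passage from the \emph{two-sided} conjugation $\lambda_w*(\cdot)*\lambda_{w\inverse}$ on $H_\bullet(Z_1)$ to a \emph{one-sided} convolution action of $\lambda_w$ on $H_\bullet(\CNt)$, where the Chriss--Ginzburg formula you tacitly invoke (their Prop.~7.3.31: $\lambda_w*\pi^*(y)=\epsilon_w\,\pi^*(w\cdot y)$) actually lives. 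That formula controls only left convolution on $H_\bullet(\CNt)$; it says nothing a priori about conjugation on the diagonal $Z_1\cong\CNt$. The bridge requires three concrete inputs you gesture at but do not supply: (i)~the graph identity $[\Gamma_f]*c*[\Gamma_{f\inverse}]=(f\inverse\times f\inverse)_*(c)$ (Proposition~\ref{2to1}), applied with $f=\wtilde$ on $\fgt^{\ell^*}$, which converts the two-sided conjugation at the $\Lambda$-level into a single pushforward; (ii)~compatibility of $\delta_*$ with the specializations $\lim\nolimits_0$ on $\fgt^{\ell^*}$ and $\lim\nolimits_1$ on $\Lambda_1^{\ell^*}$ (Lemma~\ref{lemlimcomm}), so this pushforward descends to $\CNt$; and (iii)~a factor-switching identity (Lemma~\ref{switch}, together with [CG,~3.6.11]) that turns the resulting right convolution by $\lambda_{w\inverse}$ into left convolution by $\lambda_w$, so that [CG,~7.3.31] can finally be applied via Lemma~\ref{lemWcomm}. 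There is also a circularity in your framing: you would use ``the known formula for how the Kazhdan--Lusztig/Chriss--Ginzburg $W\times W$-action permutes these classes'' on $H_{4n-2}(Z_1)$, but the full $W\times W$-action on $H_\bullet(Z)$ does \emph{not} preserve $H_\bullet(Z_1)$ --- only the diagonal copy of $W$ does, which is the first assertion of the theorem, so it cannot serve as input.

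Separately, the degree-$2$ reduction buys less than you might hope. The paper's proof of the containment $\lambda_w*H_i(Z_1)*\lambda_{w\inverse}\subseteq H_i(Z_1)$ is uniform in $i$ and short: since $\Lambda_w^{\ell^*}$, $\Lambda_1^{w\inverse(\ell^*)}$, $\Lambda_{w\inverse}^{w\inverse(\ell^*)}$ are the graphs of $\wtilde$, $\id$, $\wtilde\inverse$ respectively, the product $[\Lambda_w^{\ell^*}]*c_1*[\Lambda_{w\inverse}^{w\inverse(\ell^*)}]$ lands in $H_{i+2}(\Lambda_1^{w\inverse(\ell^*)})$, and specialization (which commutes with convolution) places the image in $H_i(Z_1)$; no degree reduction is needed. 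The $W$-equivariance chain of identities is likewise degree-uniform, so restricting to degree~$2$ only reorganizes the argument without shortening it. Your instinct that the appendix graph/convolution lemmas and careful bookkeeping of the $w$-twist through $\lim$ are the heart of the matter is correct --- but those lemmas are precisely where your proposal stops.
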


Recall that $\Coinv(W) \# \BBQ W$ is graded by $(\Coinv(W) \# \BBQ
W)_i= \Coinv_i(W) \otimes \BBQ W$. Then combining Theorem \ref{cg}(c),
Theorem \ref{mult}(c), and Theorem \ref{conj} we get our main result.

\begin{theorem}\label{smashiso}
  The composition
  \[
  \Coinv_\bullet(W) \# \BBQ W \xrightarrow{\,\beta\otimes \alpha\,}
  H_{4n-\bullet}(Z_1) \otimes H_{4n}(Z) \xrightarrow{\ *\ }
  H_{4n-\bullet}(Z)
  \]
  is an isomorphism of graded $\BBQ$-algebras.
\end{theorem}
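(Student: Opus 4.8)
The plan is to assemble Theorem~\ref{smashiso} from the three results already in hand, checking that the map $* \circ (\beta \otimes \alpha)$ respects both the grading and the algebra structure. The underlying vector space of $\Coinv_\bullet(W) \# \BBQ W$ is $\Coinv_\bullet(W) \otimes \BBQ W$, and under $\beta \otimes \alpha$ this maps isomorphically to $H_{4n-\bullet}(Z_1) \otimes H_{4n}(Z)$ by Theorem~\ref{mult}(a) and Theorem~\ref{cg}(c). Composing with the convolution product $H_i(Z_1) \otimes H_{4n}(Z) \to H_i(Z)$, which is a vector-space isomorphism for every $i$ by Theorem~\ref{mult}(c), shows immediately that the composition is a graded vector-space isomorphism: an element of degree $i$ in $\Coinv_\bullet(W) \# \BBQ W$, i.e.\ in $\Coinv_i(W) \otimes \BBQ W$, lands in $\beta(\Coinv_i(W)) \otimes H_{4n}(Z) \subseteq H_{4n-i}(Z_1) \otimes H_{4n}(Z)$, and then in $H_{4n-i}(Z)$ under $*$; so the degree-$i$ part of the source goes to $H_{4n-i}(Z)$, matching the grading $H_{4n-\bullet}(Z)$.

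It remains to check multiplicativity, and this is where Theorem~\ref{conj} enters. Take two homogeneous elements $f_1 \otimes \phi_1$ and $f_2 \otimes \phi_2$ of $\Coinv_\bullet(W) \# \BBQ W$. Their product in the smash algebra is $f_1\, \phi_1(f_2) \otimes \phi_1\phi_2$. Applying $\beta \otimes \alpha$ and then $*$, and writing $r_*$ for the inclusion of $H_\bullet(Z_1)$ into $H_\bullet(Z)$ (which is a ring homomorphism by Theorem~\ref{mult}(b)), we must show
\[
\bigl(\beta(f_1) * \lambda_{\phi_1}\bigr) * \bigl(\beta(f_2) * \lambda_{\phi_2}\bigr) = \beta\bigl(f_1\, \phi_1(f_2)\bigr) * \lambda_{\phi_1 \phi_2},
\]
where $\lambda_\phi = \alpha(\phi)$ and we suppress $r_*$. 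Using associativity of the convolution product (Theorem~\ref{cg} together with the fact that $*$ is an associative product on all of $H_\bullet(Z)$), the left side equals $\beta(f_1) * \bigl(\lambda_{\phi_1} * \beta(f_2) * \lambda_{\phi_1}^{-1}\bigr) * \lambda_{\phi_1} * \lambda_{\phi_2}$, where $\lambda_{\phi_1}^{-1}$ makes sense because $\alpha$ is an algebra isomorphism onto $H_{4n}(Z)$ and $\phi_1$ is (a sum of) invertible group elements—more carefully, one reduces to the case $\phi_1 = w$ a single Weyl group element by linearity, so $\lambda_w^{-1} = \lambda_{w\inverse}$. By Theorem~\ref{conj}, conjugation by $\lambda_w$ sends $\beta(f_2) \in H_\bullet(Z_1)$ to an element of $H_\bullet(Z_1)$, and—since $\beta$ is an isomorphism of $W$-algebras with the $W$-action on $H_\bullet(Z_1)$ given precisely by this conjugation—that element is exactly $\beta(w(f_2)) = \beta(\phi_1(f_2))$ when $\phi_1 = w$. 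Finally $\beta$ is a ring homomorphism on $H_\bullet(Z_1)$, so $\beta(f_1) * \beta(\phi_1(f_2)) = \beta(f_1\,\phi_1(f_2))$, and $\lambda_w * \lambda_{\phi_2} = \lambda_{w\phi_2} = \lambda_{\phi_1\phi_2}$ since $\alpha$ is an algebra homomorphism; reassembling gives the right-hand side. Extending from single group elements to arbitrary $\phi_1 \in \BBQ W$ by bilinearity completes the multiplicativity check. One should also note the unit maps correctly: $1 \otimes 1 = (\text{generator of }\Coinv_0(W)) \otimes e \mapsto \beta(1) * \lambda_e = [Z_1] * \lambda_e$, which is the identity of $H_\bullet(Z)$.

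The main obstacle is bookkeeping rather than any genuine difficulty: one must be careful that the convolution product on $H_\bullet(Z)$ restricts compatibly to the products on $H_\bullet(Z_1)$ (via $r_*$, Theorem~\ref{mult}(b)) and on $H_{4n}(Z)$ (via $\alpha$, Theorem~\ref{cg}(c)), and that the $W$-action making $\beta$ equivariant is literally conjugation by $\lambda_w$ inside $H_\bullet(Z)$ as asserted in Theorem~\ref{conj}—so that every manipulation above takes place in the single associative algebra $H_\bullet(Z)$. Once those identifications are in place, the computation is the standard verification that a crossed-product-type multiplication is reproduced, and no further geometry is needed beyond what Theorems~\ref{cg}, \ref{mult}, and \ref{conj} supply. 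The grading claim needs only the observation that $*$ lowers total degree by $4n$ (Theorem~\ref{cg}(a)) and that $\lambda_w$ sits in top degree $4n$, so convolving with $\lambda_w$ preserves homological degree—hence the degree-$i$ part of $\Coinv_\bullet(W)\#\BBQ W$ maps into $H_{4n-i}(Z)$ as required.
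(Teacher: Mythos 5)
Your proposal is correct and follows the same route the paper intends: the paper itself gives no separate proof of Theorem~\ref{smashiso}, saying only that it follows by combining Theorem~\ref{cg}(c), Theorem~\ref{mult}, and Theorem~\ref{conj}, and your write-up supplies precisely the omitted bookkeeping (reduction to a single group element $w$ so that $\lambda_w\inverse=\lambda_{w\inverse}$ makes sense, the associativity rearrangement inside $H_\bullet(Z)$, the invocation of $W$-equivariance of $\beta$ from Theorem~\ref{conj}, and the grading check via Theorem~\ref{cg}(a)).
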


\section{Factorization of $H_{\bullet}(Z)$}

\subsection*{Proof of Theorem \ref{mult}(a)}
We need to prove that $H_{\bullet}(Z_1)$ is a commutative
$\BBQ$-algebra and that $ \Coinv_\bullet(W) \cong H_{4n-\bullet}(Z_1)$.

Let $\pi\colon \CNt\to \CB$ by $\pi(x, B')=B'$. Then $\pi$ may be
identified with the vector bundle projection $T^*\CB\to \CB$ and so
the induced map in cohomology $\pi^*\colon H^i(\CB)\to H^i(\CNt)$ is
an isomorphism. The projection $\pi$ determines an isomorphism in
Borel-Moore homology that we will also denote by $\pi^*$ (see
\cite[\S2.6.42] {chrissginzburg:representation}). We have
$\pi^*\colon H_i(\CB) \xrightarrow{\ \cong\ }  H_{i+2n}(\CNt)$.

For a smooth $m$-dimensional variety $X$, let $\pd\colon H^i(X) \too
H_{2m-i}(X)$ denote the Poincar\'e duality isomorphism. Then the
composition
\[
H_{2n-i}(\CB) \xrightarrow{\ \pd\inverse\ } H^i(\CB) \xrightarrow{\
  \pi^*\ } H^i(\CNt) \xrightarrow{\ \pd\ } H_{4n-i}(\CNt)
\]
is an isomorphism. It follows from the uniqueness construction in
\cite[\S2.6.26] {chrissginzburg:representation} that
\[
\pd \circ \pi^* \circ \pd\inverse= \pi^* \colon H_{2n-i}(\CB)\too
H_{4n-i}(\CNt)
\]
and so $\pi^* \circ \pd= \pd\circ \pi^*\colon H^i(\CB)\too
H_{4n-i}(\CNt)$.

Recall that $\Coinv_{j}(W) = 0$ if $j$ is odd and $\Coinv_{2i}(W)$ is
the degree $i$ subspace of the coinvariant algebra of $W$. Let
$\bi\colon \Coinv_\bullet(W)\too H^\bullet(\CB)$ be the Borel
isomorphism (see \cite[\S1.5]{borhobrylinskimacpherson:nilpotent} or
\cite{hiller:geometry}). Then with the cup product, $H^\bullet(\CB)$
is a graded algebra and $\bi$ is an isomorphism of graded algebras.

Define $\beta\colon \Coinv_{i}(W) \to H_{4n-i}(Z_1)$ to be the
composition
\[
\Coinv_{i}(W) \xrightarrow{\ \bi\ } H^{i}(\CB) \xrightarrow{\ \pi^*\ }
H^{i}(\CNt) \xrightarrow{\ \pd\ } H_{4n-i}(\CNt) \xrightarrow{\
  \delta_*\ } H_{4n-i}(Z_1)
\]
where $\delta=\delta_{\CNt}$. Then $\beta$ is an isomorphism of graded
vector spaces and
\[
\beta= \delta_*\circ \pd\circ \pi^* \circ \bi= \delta_* \circ \pi^*
\circ \pd \circ \bi.
\]

The algebra structure of $H^\bullet(\CB)$ and $H^\bullet(\CNt)$ is
given by the cup product, and $\pi^*\colon H^\bullet(\CB)\to
H^\bullet(\CNt)$ is an isomorphism of graded algebras.  
Since $\CNt$ is smooth, as in
\cite[\S2.6.15]{chrissginzburg:representation}, there is an
intersection product defined on $H_\bullet(\CNt)$ using Poincar\'e
duality and the cup product on $H^\bullet(\CNt)$. Thus, $\pd\colon
H^\bullet(\CNt)\to H_{4n-\bullet}( \CNt)$ is an algebra isomorphism.
Finally, it is observed in
\cite[\S2.7.10]{chrissginzburg:representation} that $\delta_*\colon
H_\bullet(\CNt)\to H_\bullet(Z_1)$ is a ring homomorphism and hence an
algebra isomorphism. This shows that $\beta$ is an isomorphism of
graded algebras and proves Theorem \ref{mult}(a).

\subsection*{Proof of Theorem \ref{mult}(b)} 
To prove the remaining parts of Theorem \ref{mult}, we need a linear
order on $W$. Suppose $|W|=N$. Fix a linear order on $W$ that extends
the Bruhat order. Say $W=\{w_1, \dots, w_N\}$, where $w_1 = 1$ and
$w_N$ is the longest element in $W$.

For $1\leq j\leq N$, define $Z_j= \coprod_{i=1}^j Z_{w_i}$. Then, for
each $j$, $Z_j$ is closed in $Z$, $Z_{w_j}$ is open in $Z_j$, and
$Z_j= Z_{j-1} \coprod Z_{w_j}$. Notice that $Z_N=Z$ and $Z_1=Z_{w_1}$.

Similarly, define $\Zhat_j= \coprod_{i=1}^j \Zhat_{w_i}$. Then each
$\Zhat_j$ is closed in $\Zhat$, $\Zhat_{w_j}$ is open in $\Zhat_j$,
and $\Zhat_j= \Zhat_{j-1} \coprod \Zhat_{w_j}$.

We need to show that $r_*\colon H_\bullet(Z_1) \too H_\bullet(Z)$ is
an injective ring homomorphism.

Let $\res_j \colon H_i(Z_j) \to H_i(Z_{w_j})$ denote the restriction
map in Borel-Moore homology induced by the open embedding $Z_{w_j}
\subseteq Z_j$ and let $r_{j} \colon H_i(Z_{j-1}) \too H_i(Z_{j})$
denote the direct image map in Borel-Moore homology induced by the
closed embedding $Z_{j-1} \subseteq Z_j$. Then there is a long exact
sequence in homology
\[
\xymatrix@1{ \dotsm \ar[r] & H_i(Z_{j-1}) \ar[r]^{r_{j}} & H_i(Z_j)
  \ar[r]^-{\res_j} & H_i(Z_{w_j}) \ar[r]^-{\partial} &
  H_{i-1}(Z_{j-1}) \ar[r] & \dotsm }
\]
It is shown in \cite[\S6.2]{chrissginzburg:representation} that
$\partial=0$ and so the sequence
\begin{equation}
  \label{eq:ses}
  \xymatrix@1{ 0 \ar[r] & H_i(Z_{j-1}) \ar[r]^-{r_{j}} & H_i(Z_j)
    \ar[r]^-{\res_j} & H_i(Z_{w_j}) \ar[r] & 0 
  } 
\end{equation}
is exact for every $i$ and $j$. Therefore, if $r\colon Z_j\to Z$
denotes the inclusion, then the direct image $r_* \colon H_i(Z_j)\to
H_i(Z)$ is an injection for all $i$. (The fact that $r$ depends on $j$
should not lead to any confusion.)

We will frequently identify $H_i(Z_j)$ with its image in $H_i(Z)$ and
consider $H_i(Z_j)$ as a subset of $H_i(Z)$. Thus, we have a flag of
subspaces $0 \subseteq H_i(Z_1) \subseteq \dotsm \subseteq
H_i(Z_{N-1})\subseteq H_i(Z)$.

In particular, $r_* \colon H_i(Z_1)\to H_i(Z)$ is an injection for all
$i$. It follows from \cite[Lemma 5.2.23]
{chrissginzburg:representation} that $r_*$ is a ring homomorphism.
This proves part (b) of Theorem \ref{mult}.

\subsection*{Proof of Theorem \ref{mult}(c)}
We need to show that the linear transformation given by the
convolution product $H_i(Z_1)\otimes H_{4n}(Z) \to H_i(Z)$ is an
isomorphism of vector spaces for $0\leq i\leq 4n$.

The proof is a consequence of the following lemma.

\begin{lemma}\label{surjj}
  The image of the convolution map $* : H_i(Z_1) \otimes H_{4n}(Z_j)
  \too H_i(Z)$ is precisely $H_i(Z_j)$ for $0\leq i\leq 4n$ and $1\leq
  j\leq N$.
\end{lemma}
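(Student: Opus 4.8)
The plan is to prove Lemma~\ref{surjj} by induction on $j$, using the short exact sequence~(\ref{eq:ses}) to reduce the statement about $Z_j$ to the statement about the single cell $Z_{w_j}$ together with the inductive hypothesis for $Z_{j-1}$. The base case $j=1$ is immediate: $Z_1 = Z_{w_1}$, and the convolution map $H_i(Z_1)\otimes H_{4n}(Z_1) \to H_i(Z)$ contains $H_i(Z_1)$ in its image because the fundamental class $[Z_1]$, which equals $\lambda_1$ up to a scalar, acts as a (left) identity for convolution with classes supported on $Z_1$ — more precisely, $c * [Z_1] = c$ for $c\in H_i(Z_1)$, since $Z_1$ is the diagonal and convolution with the diagonal class is the identity (this is the content of the identification in \cite[\S2.7.10]{chrissginzburg:representation} that $\delta_*$ is a ring homomorphism). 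Hence the image is exactly $H_i(Z_1)$.

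For the inductive step, assume the image of $* : H_i(Z_1)\otimes H_{4n}(Z_{j-1}) \to H_i(Z)$ is $H_i(Z_{j-1})$. Since $H_{4n}(Z_{j-1}) \subseteq H_{4n}(Z_j)$ under the injections of~(\ref{eq:ses}), the image of $* : H_i(Z_1)\otimes H_{4n}(Z_j) \to H_i(Z)$ certainly contains $H_i(Z_{j-1})$. It remains to show two things: first, that the image is contained in $H_i(Z_j)$; and second, that it surjects onto $H_i(Z_j)/H_i(Z_{j-1}) \cong H_i(Z_{w_j})$. The containment in $H_i(Z_j)$ should follow from a support/associativity argument for convolution: classes in $H_i(Z_1)$ are supported on $Z_1$, classes in $H_{4n}(Z_j)$ are supported on $Z_j$, and a set-theoretic composition-of-correspondences estimate shows the product is supported on a subvariety of $Z$ that lies inside $Z_j$ (one checks that $Z_1 \circ Z_j \subseteq Z_j$ and $Z_j \circ Z_j \subseteq Z_j$ as correspondences in $\CB\times\CB$, because $\overline{\CO_{w_j}}$ is a union of cells $\CO_{w_i}$ with $w_i \leq w_j$ and $\CO_1 \circ \CO_{w_j} = \CO_{w_j}$). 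This is exactly the kind of statement I expect to be packaged in the appendix on graphs and convolution that the introduction promises. For the surjectivity onto the quotient, I would use that $\res_j$ is a ring homomorphism on the open piece, or rather that convolution is compatible with restriction to the open subset $Z_{w_j}$: given the image under $\res_j$ in $H_i(Z_{w_j})$, I want to produce a preimage. Here the key input is an analogue of Theorem~\ref{cg}(c) and the geometry of $Z_{w_j} \cong G\times^{B\cap{}^wB}\fu_w$: convolution with $\lambda_{w_j}$ (more precisely with the fundamental class $[Z_{w_j}]$, or its closure $[\overline{Z_{w_j}}]$) should map $H_i(Z_1)$ onto a subspace of $H_i(Z_j)$ that maps isomorphically onto $H_i(Z_{w_j})$ under $\res_j$. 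Concretely, $\res_j(c * [\overline{Z_{w_j}}])$ should, up to the isomorphism identifying $H_i(Z_1)$ with $H^{\,4n-i}(\CB)$-type data and $H_i(Z_{w_j})$ with the corresponding cohomology of $\CO_{w_j}$, reduce to pullback along a smooth fibration, which is surjective.

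The main obstacle I anticipate is the surjectivity onto $H_i(Z_{w_j})$: one must show that convolving the full space $H_i(Z_1)$ with a single well-chosen top class hitting the cell $\CO_{w_j}$ already exhausts $H_i(Z_{w_j})$. Because $Z_{w_j}$ fibers over $\CO_w$ with fiber a vector space $\fu_w$, by the Thom isomorphism $H_i(Z_{w_j}) \cong H_{i - 2\dim\fu_w}(\CO_w) \cong H^{\,?}(\CO_w)$, and $\CO_w \cong G/(B\cap{}^wB)$ fibers over $\CB$; the pullback $H^\bullet(\CB) \to H^\bullet(\CO_w)$ is surjective because $\CB$ is the base of a fibration with connected fiber (or because $H^\bullet(G/B) \to H^\bullet(G/(B\cap{}^wB))$ is split surjective, both being described by Schubert-type bases and Leray--Hirsch applies). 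Translating ``pullback of cohomology classes from $\CB$ is surjective onto $H^\bullet(\CO_w)$'' into ``convolution of $H_i(Z_1)$ with $[\overline{Z_{w_j}}]$ surjects onto $H_i(Z_{w_j})$'' is the crux, and will require carefully tracing the definitions of convolution and the restriction map $\res_j$ through the various Thom and Poincar\'e duality identifications. Once both the containment and the surjectivity-mod-$H_i(Z_{j-1})$ are established, a diagram chase in~(\ref{eq:ses}) — the image contains $H_i(Z_{j-1})$ and surjects onto the quotient, hence equals $H_i(Z_j)$ — completes the induction. Finally, taking $j=N$ in Lemma~\ref{surjj} gives that $* : H_i(Z_1)\otimes H_{4n}(Z)\to H_i(Z)$ is surjective; injectivity then follows by a dimension count, since $\dim H_i(Z_1)\otimes H_{4n}(Z) = \dim\Coinv_{4n-i}(W)\cdot|W| = \dim H_i(Z)$, the last equality being the decomposition of $H_i(Z)$ into its cell-graded pieces each isomorphic to $H_i(Z_{w_j}) \cong \Coinv_{4n-i}(W)$ via~(\ref{eq:ses}) and Theorem~\ref{mult}(a).
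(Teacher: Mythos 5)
Your overall strategy matches the paper's: induct on $j$ via the short exact sequence \eqref{eq:ses}, settle the base case $j=1$ by the identity $\lambda_1$, and reduce the inductive step to (i) containment of the image in $H_i(Z_j)$ and (ii) surjectivity of the induced map $H_i(Z_1)\otimes H_{4n}(Z_{w_j})\to H_i(Z_{w_j})$, then close with the Five Lemma and a final dimension count. Your containment argument via the composition estimate $Z_1\circ Z_j=Z_j$ (the diagonal absorbs) is valid, in fact a bit slicker than the paper, which instead lifts to $\Lambda_{w_j}^{\ell^*}$ and uses the factorization $\lim=r_*\circ\lim_j$ to land in $H_i(Z_j)$.

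But the crux — step (ii), surjectivity onto $H_i(Z_{w_j})$ — you only sketch and explicitly defer (``will require carefully tracing the definitions\ldots''), and the direct approach you propose has a real obstruction. You want to convolve with $[\overline{Z_{w_j}}]$ and argue that, after restriction to the open cell $Z_{w_j}$ and various Thom/Poincar\'e identifications, this becomes pullback along a fibration. The difficulty is that $Z_{w_j}\cong G\times^{B\cap{}^wB}\fu_w$, viewed as a correspondence on $\CNt$, is \emph{not} the graph of a map, so convolution with its class has no a priori reason to be an isomorphism or surjection onto $H_i(Z_{w_j})$; extracting the ``pullback'' description you want from the definition of convolution with a singular, non-graph correspondence is precisely the hard computation you do not do. (There is also a minor conflation: $\lambda_{w_j}$ is not $[\overline{Z_{w_j}}]$ in general; the two bases of $H_{4n}(Z)$ are related by a unitriangular change of basis. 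For your purposes only the restriction to $Z_{w_j}$ matters, so this is harmless, but worth flagging.)

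The key idea the paper uses, and which your proposal is missing, is to carry out step (ii) by \emph{specialization from the regular locus}: over $\ell^*\subseteq\ft_{\reg}$, the variety $\Lambda_{w}^{\ell^*}$ \emph{is} the graph of the homeomorphism $\wtilde$ of $\fgt^{\ell^*}$ (Lemma \ref{lemgraph}), so convolution
\[
H_{i+2}(\Lambda_1^{\ell^*})\otimes H_{4n+2}(\Lambda_w^{\ell^*})\longrightarrow H_{i+2}(\Lambda_w^{\ell^*})
\]
is an isomorphism (Proposition \ref{propg1} plus a dimension count). Combining this with the surjectivity of $\res_j\circ\lim_j$ (Lemma \ref{lem3.6}, which is where the paper does the concrete Thom-class computation with the bundles $E_I$, $E_{I_{>0}}$, $E_0$ over $G/(B\cap{}^wB)$), and with the fact that specialization commutes with convolution, yields exactly the surjectivity you need — without ever convolving against a singular correspondence. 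So: correct skeleton, correct identification of the crux, but the crux itself is left unproved, and the route you indicate for it is genuinely harder than the specialization route the paper takes.
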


Assuming that the lemma has been proved, taking $j=N$, we conclude
that the convolution product in $H_\bullet (Z)$ induces a surjection
$H_i(Z_1) \otimes H_{4n}(Z)\too H_i(Z)$. It is shown in \cite[\S6.2]
{chrissginzburg:representation} that $\dim H_\bullet(Z)= |W|^2$ and so
$\dim H_\bullet(Z_1)\otimes H_{4n}(Z)= |W|^2= \dim H_\bullet(Z)$.
Thus, the convolution product induces an isomorphism $H_i(Z_1) \otimes
H_{4n}(Z) \cong H_i(Z)$.

The rest of this section is devoted to the proof of Lemma \ref{surjj}.

To prove Lemma \ref{surjj} we need to analyze the specialization map,
$\lim\colon H_{\bullet+2}( \Lambda_w^{\ell^*}) \to H_{\bullet}(Z)$,
beginning with the subvarieties $\Lambda_w^{\ell}$ and $\Lambda_w^{
  \ell^*}$ of $\Lambda_w$.

\subsection*{Subvarieties of $\Lambda_w$}
Suppose that $\ell$ is a one-dimensional subspace of $\ft$ with
$\ell^* = \ell\setminus \{0\} =\ell\cap \ft_\reg$.  Recall that
$\fu_w= \fu \cap w\cdot \fu$ for $w$ in $W$.

\begin{lemma}\label{lemsat}
  The variety $\Lambda_w^{\ell} \cap \Zhat_w$ is the $G$-saturation in
  $\Zhat$ of $\{\,(h+n, B, wBw\inverse) \mid h\in \ell,\, n\in
  \fu_w\,\}$.
\end{lemma}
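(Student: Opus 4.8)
I want to identify the subvariety $\Lambda_w^{\ell}\cap \Zhat_w$ of $\Zhat_w \cong G\times^{B\cap{}^wB}(\text{something})$ by pinning down exactly which points of $\Zhat_w$ lie over the line $\Gamma_{w\inverse}|_\ell$ under $\nu_w$. The strategy is to parametrize $\Zhat_w$ explicitly, compute the map $\nu_w$ in those coordinates, and then read off the fibre conditions.

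\textbf{Step 1: Parametrize $\Zhat_w$.} Recall $\Zhat_w = \hat\pi\inverse(\CO_w)$, where $\CO_w$ is the $G$-orbit of $(B, wBw\inverse)$ in $\CB\times\CB$. A point of $\Zhat_w$ is a triple $(x, B', B'')$ with $(B', B'')\in\CO_w$ and $x\in\Lie(B')\cap\Lie(B'')$. Writing $(B', B'') = (gBg\inverse, gwBw\inverse g\inverse)$ for some $g\in G$ (determined up to right multiplication by $B\cap{}^wB$), the condition $x\in\Lie(B')\cap\Lie(B'')$ becomes $g\inverse\cdot x \in \fb\cap w\cdot\fb$. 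So $\Zhat_w$ is the $G$-saturation of $\{\,(y, B, wBw\inverse)\mid y\in\fb\cap w\cdot\fb\,\}$, and since $\fb = \ft\oplus\fu$ and $w\cdot\fb = w\cdot\ft\oplus w\cdot\fu$, I can decompose $\fb\cap w\cdot\fb$. The key linear-algebra fact is that $\fb\cap w\cdot\fb = \ft\oplus(\fu\cap w\cdot\fu) = \ft\oplus\fu_w$: indeed $w\cdot\ft = \ft$, and on root spaces $\fb\cap w\cdot\fb$ picks out exactly the positive roots $\alpha$ with $w\inverse\alpha$ also positive, which is $\fu_w$. Thus every $y\in\fb\cap w\cdot\fb$ is uniquely $h + n$ with $h\in\ft$, $n\in\fu_w$.

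\textbf{Step 2: Compute $\nu_w$ in these coordinates.} For the point $(y, B, wBw\inverse)$ with $y = h+n$, the map $\nu$ on the first cotangent-bundle-type factor $(y, B)$ is the projection of $1\inverse\cdot y = y$ to $\ft$ along $\fu$, which is $h$. For the second factor $(y, wBw\inverse)$, write $wBw\inverse = (w)\cdot B$ using a representative $\dot w\in N_G(T)$; then $\nu(y, \dot w B\dot w\inverse)$ is the projection of $\dot w\inverse\cdot y$ to $\ft$ along $\fu$. Since $\dot w\inverse\cdot y = \dot w\inverse\cdot h + \dot w\inverse\cdot n = w\inverse h + \dot w\inverse\cdot n$, and $\dot w\inverse\cdot n\in\dot w\inverse\cdot\fu_w = \fu\inverse\cap\cdots\subseteq\fu$ (because $n\in w\cdot\fu$ forces $\dot w\inverse\cdot n\in\fu$), the $\ft$-component is $w\inverse h$. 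Hence $\nu_w(y, B, wBw\inverse) = (h, w\inverse h)\in\Gamma_{w\inverse}$ automatically — every point of the "model slice" already lies over $\Gamma_{w\inverse}$ — and more precisely it lies over the point of $\Gamma_{w\inverse}$ indexed by $h\in\ft$. Since $\nu_w$ is $G$-equivariant (with $G$ acting trivially on $\ft\times\ft$), the same holds after $G$-saturation.

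\textbf{Step 3: Assemble.} By definition $\Lambda_w^\ell = \nu_w\inverse(\Gamma_{w\inverse}|_\ell)$, where $\Gamma_{w\inverse}|_\ell = \{(h, w\inverse h)\mid h\in\ell\}$. Intersecting with $\Zhat_w$ and using Steps 1--2: a point of $\Zhat_w$ is in $\Lambda_w^\ell$ iff, when we move it by $G$ to the slice $(h+n, B, wBw\inverse)$, we have $h\in\ell$. Therefore $\Lambda_w^\ell\cap\Zhat_w$ is exactly the $G$-saturation of $\{\,(h+n, B, wBw\inverse)\mid h\in\ell,\ n\in\fu_w\,\}$, which is the claim. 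The $G$-saturation is well-defined as a subvariety because $\Zhat_w$ is a single $G$-orbit bundle over $\CO_w$ and the slice through $(B, wBw\inverse)$ is $B\cap{}^wB$-stable (as $\fb\cap w\cdot\fb$ is $\Ad(B\cap{}^wB)$-stable).

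\textbf{Main obstacle.} I expect the only real subtlety to be the bookkeeping with representatives $\dot w\in N_G(T)$ in Step 2 — verifying that $\nu$ on the second factor genuinely returns $w\inverse h$ independent of the choice of $\dot w$, and correctly tracking that $\dot w\inverse\cdot\fu_w\subseteq\fu$ so that the nilpotent part $n$ contributes nothing to either $\ft$-component. This is exactly the kind of computation made transparent by Grothendieck's simultaneous resolution, so once the slice is set up it should be routine; the identification $\fb\cap w\cdot\fb = \ft\oplus\fu_w$ is the conceptual heart.
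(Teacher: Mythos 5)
Your proof is correct, and it takes a genuinely different route from the paper's.

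The paper splits $\Lambda_w^{\ell} \cap \Zhat_w$ into a regular piece and a special piece, $\Lambda_w^{\ell^*} \coprod (\Lambda_w^0 \cap \Zhat_w) = \Lambda_w^{\ell^*} \coprod Z_w$, and treats them separately. For the regular piece it exploits the regularity of $h\in\ell^*$: given $(x,g_1Bg_1\inverse,g_2Bg_2\inverse)\in\Lambda_w^h$, it conjugates each $g_i\inverse\cdot x$ back to $h$ and $w\inverse h$ by unipotent elements and deduces that $\Lambda_w^h$ is a \emph{single} $G$-orbit, namely $G\cdot(h,B,wBw\inverse)$, contained in $\Zhat_w$. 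For $h=0$ it just cites $Z_w\cong G\times^{B\cap {}^wB}\fu_w$. Your argument instead works uniformly in $h$: you parametrize $\Zhat_w$ as the $G$-saturation of the $B\cap{}^wB$-stable slice $\{(y,B,wBw\inverse)\mid y\in\fb\cap w\cdot\fb\}$, observe $\fb\cap w\cdot\fb=\ft\oplus\fu_w$, and compute $\nu_w$ on the slice (using $\dot w\inverse\cdot\fu_w\subseteq\fu$) to see that it returns $(h,w\inverse h)$ regardless of $n$. This shows $\Zhat_w\subseteq\Lambda_w$ outright and reduces the cut by $\Lambda_w^\ell$ to the linear condition $h\in\ell$, with no regularity needed and no case split. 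Your route is cleaner and more direct for the lemma as stated; the trade-off is that the paper's regularity argument delivers a by-product it reuses immediately afterwards in Lemma \ref{lemgraph} — namely that $\Lambda_w^{\ell^*}$ lies entirely in $\Zhat_w$ and, for $h$ regular, $\Lambda_w^h$ is exactly the $G$-orbit of $(h,B,wBw\inverse)$ — which your proof does not establish explicitly (though it could be extracted with a small extra argument using that a regular semisimple $x$ lies in only finitely many Borel subalgebras).

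Two small points of hygiene in your write-up: the fragment ``$\dot w\inverse\cdot\fu_w=\fu\inverse\cap\cdots$'' is garbled notation, though the parenthetical ``$n\in w\cdot\fu$ forces $\dot w\inverse\cdot n\in\fu$'' carries the correct content; and it is worth stating explicitly that $\nu$ is $G$-invariant (i.e.\ $\nu(g\cdot x, gB'g\inverse)=\nu(x,B')$), since that is what allows you to read off $\nu_w$ on all of $\Zhat_w$ from its value on the slice. Neither affects the correctness of the argument.
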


\begin{proof}
  By definition,
  \[
  \Lambda_w^{\ell}= \Lambda_w^{\ell^*} \coprod \Lambda_w^0= \{\, (x,
  B', B'')\in \Zhat \mid \nu(x,B'')= w\inverse \nu(x,B')\in w\inverse
  (\ell)\,\}.
  \]

  Suppose that $h$ is in $\ft_\reg$ and $(x, g_1Bg_1\inverse,
  g_2Bg_2\inverse)$ is in $\Lambda_w^h$. Then $g_1\inverse \cdot x=
  h+n_1$ and $g_2\inverse \cdot x= w\inverse h+n_2$ for some $n_1$ and
  $n_2$ in $\fu$. Since $h$ is regular, there are elements $u_1$ and
  $u_2$ in $U$ so that $u_1\inverse g_1\inverse \cdot h= h$ and
  $u_2\inverse g_2\inverse \cdot h= w\inverse h$. Then $x= g_1u_1
  \cdot h= g_2u_2 w\inverse \cdot h$ and so $g_1u_1= g_2u_2 w\inverse
  t$ for some $t$ in $T$. Therefore, $(x, g_1Bg_1\inverse,
  g_2Bg_2\inverse)= g_1u_1 \cdot (h, B, wBw\inverse)$. Thus,
  $\Lambda_w^h$ is contained in the $G$-orbit of $(h, B,wBw\inverse)$.
  Since $\nu$ is $G$-equivariant, it follows that $\Lambda_w^h$ is
  $G$-stable and so $\Lambda_w^h$ is the full $G$-orbit of $(h,
  B,wBw\inverse)$.  Therefore, $\Lambda_w^{\ell^*}$ is the
  $G$-saturation of $\{\,(h+n, B, wBw\inverse\mid h\in \ell^*,\, n\in
  \fu_w\,\}$ and $\Lambda_w^h\subseteq \Zhat_w$ for $h$ in $\ell^*$.

  We have already observed that $\Lambda_w^0=Z$ and so
  \[
  \Lambda_w^{\ell} \cap \Zhat_w= \left(\Lambda_w^{\ell^*} \cap \Zhat_w
  \right)\coprod \left( \Lambda_w^0 \cap \Zhat_w \right)=
  \Lambda_w^{\ell^*} \coprod Z_w.
  \] 
  It is easy to see that $Z_w$ is the $G$-saturation of $\{\,(n, B,
  wBw\inverse) \mid n\in \fu_w\,\}$ in $Z$. This proves the lemma.
\end{proof}

\begin{corollary}\label{corfb}
  The variety $\Lambda_w^{\ell} \cap \Zhat_w$ is a locally trivial,
  affine space bundle over $\CO_w$ with fibre isomorphic to $\ell+
  \fu_w$, and so there is an isomorphism $\Lambda_w^{\ell} \cap
  \Zhat_w \cong G\times^{{B\cap {}^wB}} \left( \ell+ \fu_w\right)$.
\end{corollary}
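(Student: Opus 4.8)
The plan is to read the bundle structure off from Lemma \ref{lemsat} together with the standard description of $\Zhat_w$ as a homogeneous fibre bundle. Recall that $G$ acts transitively on $\CO_w$ with the stabilizer of the base point $(B, wBw\inverse)$ equal to $B\cap {}^wB$, so $\CO_w\cong G/(B\cap {}^wB)$; and that the restriction of $\hat\pi$ to $\Zhat_w$ is a $G$-equivariant map onto $\CO_w$ whose fibre over $(B,wBw\inverse)$ is $\Lie(B)\cap\Lie(wBw\inverse)=\ft\oplus\fu_w$, which identifies $\Zhat_w\cong G\times^{B\cap {}^wB}(\ft\oplus\fu_w)$. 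I will show that $\Lambda_w^\ell\cap\Zhat_w$ is exactly the $G$-stable subvariety of $\Zhat_w$ corresponding to the $(B\cap {}^wB)$-stable subspace $\ell+\fu_w$ of $\ft\oplus\fu_w$, and then invoke the associated-bundle construction.

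The heart of the matter is the fibre computation over $(B, wBw\inverse)$. An element of the fibre of $\Zhat_w$ there is a triple $(x, B, wBw\inverse)$ with $x = h + n$, $h\in\ft$, $n\in\fu_w$, and then $\nu(x, B) = h$. Since $\fu_w\subseteq w\cdot\fu$, one has $\dot w\inverse\cdot n\in\fu$ for a representative $\dot w$ of $w$, so that $\nu(x, wBw\inverse)=w\inverse h$ holds automatically; hence, among such triples, lying in $\Lambda_w^\ell$ amounts simply to the condition $h\in\ell$ (compare the computation inside the proof of Lemma \ref{lemsat}). Thus the fibre of the restriction of $\hat\pi$ to $\Lambda_w^\ell\cap\Zhat_w$ over $(B, wBw\inverse)$ is $\{\,h+n\mid h\in\ell,\ n\in\fu_w\,\}$, which I identify with the linear subspace $\ell+\fu_w$ of $\fg$. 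By Lemma \ref{lemsat} the variety $\Lambda_w^\ell\cap\Zhat_w$ is $G$-stable and its image under $\hat\pi$ is all of $\CO_w$, so by the usual homogeneous-fibre-bundle argument (any $G$-variety admitting a $G$-equivariant surjection onto $G/(B\cap {}^wB)$ is canonically the bundle associated to its fibre over the base point, which is automatically stable under the stabilizer $B\cap {}^wB$) we obtain $\Lambda_w^\ell\cap\Zhat_w\cong G\times^{B\cap {}^wB}(\ell+\fu_w)$. Because the $(B\cap {}^wB)$-action on $\ell+\fu_w$ is the restriction of the linear adjoint action, this exhibits $\Lambda_w^\ell\cap\Zhat_w$ as a vector bundle, in particular an affine space bundle, over $\CO_w$ with fibre $\ell+\fu_w$.

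Finally, local triviality in the Zariski topology follows since $B\cap {}^wB$ is a connected solvable algebraic group, hence a special group, so $G\to\CO_w$ is a Zariski-locally trivial principal $(B\cap {}^wB)$-bundle and therefore so is any bundle associated to it. I expect no genuine obstacle: the only computation of substance is the verification that $\nu(x, wBw\inverse) = w\inverse\nu(x, B)$ is automatic on $\Zhat_w$, which is precisely where the definition $\fu_w = \fu\cap w\cdot\fu$ enters, and this has essentially already been carried out in the proof of Lemma \ref{lemsat}.
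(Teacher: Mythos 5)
Your proof is correct and takes essentially the same approach as the paper: both identify $\Lambda_w^{\ell}\cap\Zhat_w$ as a $G$-equivariant fibration over $\CO_w$ via projection on the last two factors and compute the fibre over the base point $(B,wBw\inverse)$ to be $\ell+\fu_w$, then apply the associated-bundle construction. The paper reads the fibre off directly from Lemma \ref{lemsat}; your direct verification that the compatibility condition $\nu(x,wBw\inverse)=w\inverse\nu(x,B)$ holds automatically on $\Zhat_w$ is a fine (and correct) elaboration of the same computation, and the remark on local triviality via specialness of $B\cap{}^wB$ is a harmless addition.
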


\begin{proof}
  It follows from Lemma \ref{lemsat} that the map given by projection
  on the second and third factors is a $G$-equivariant morphism from
  $\Lambda_w^\ell$ onto $\CO_w$ and that the fibre over $(B,
  wBw\inverse)$ is $\{\, (h+n, B, wBw\inverse)\mid h\in \ell,\, n \in
  \fu_w\,\}$. Therefore, $\Lambda_w^\ell \cong G\times^{B\cap {}^wB}
  \left( \ell + \fu_w\right)$.
\end{proof}

Let $\fgrs$ denote the set of regular semisimple elements in $\fg$ and
define $\fgrst=\{\, (x, B')\in \fgt \mid x\in \fgrs\,\}$.  For an
arbitrary subset $S$ of $\ft$, define $\fgt^S=\nu\inverse(S)= \{\, (x,
B')\in \fgt \mid \nu(x,B')\in S\,\}$.

For $w$ in $W$, define $\wtilde\colon G/T\times \ft_\reg\too G/T\times
\ft_\reg$
by $\wtilde(gT, h)= (gwT, w\inverse h)$. The rule $(gT, h)\mapsto
(g\cdot h, gB)$ defines an isomorphism of varieties $f\colon G/T\times
\ft_\reg \xrightarrow{\ \cong\ } \fgrst$ and we will denote the
automorphism $f\circ \wtilde \circ f\inverse$ of $\fgrst$ also by
$\wtilde$. Notice that if $h$ is in $\ft_\reg$ and $g$ is in $G$, then
$\wtilde(g\cdot h, gB)= (g\cdot h, gwBw\inverse g\inverse)$.

\begin{lemma}\label{lemgraph}
  The variety $\Lambda_w^{\ell^*}$ is the graph of $\wtilde|_{
    \fgt^{\ell^*}}\colon \fgt^{\ell^*}\to \fgt^{w\inverse(\ell^*)}$.
\end{lemma}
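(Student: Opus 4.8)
\textbf{Proof proposal for Lemma \ref{lemgraph}.}

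The plan is to unwind both sides of the claimed equality in terms of the coordinates supplied by the isomorphism $f\colon G/T\times \ft_\reg \xrightarrow{\ \cong\ } \fgrst$. First I would observe that, by the computation in the proof of Lemma \ref{lemsat}, for $h$ in $\ft_\reg = \ft_\reg$ the fibre $\Lambda_w^h$ equals the full $G$-orbit of $(h, B, wBw\inverse)$ inside $\Zhat$, and every point of this orbit has the form $(g\cdot h, gBg\inverse, gwBw\inverse g\inverse)$ for $g$ in $G$. Since $\Lambda_w^{\ell^*} = \coprod_{h\in \ell^*} \Lambda_w^h$ sits inside $\Zhat \subseteq \fgt\times\fgt$, identifying $\Zhat$ with pairs $((x,B'),(x,B''))$ as in the text, I get that $\Lambda_w^{\ell^*}$ is exactly the set of pairs $\bigl((g\cdot h, gBg\inverse),\ (g\cdot h, gwBw\inverse g\inverse)\bigr)$ with $g$ in $G$ and $h$ in $\ell^*$. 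The point here is that both components have first coordinate the regular semisimple element $g\cdot h$, so each component lies in $\fgrst$; the first lies in $\fgt^{\ell^*}$ since $\nu(g\cdot h, gB) = h \in \ell^*$, and the second in $\fgt^{w\inverse(\ell^*)}$ since $\nu(g\cdot h, gwBw\inverse g\inverse) = w\inverse h$.

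Next I would compare this with the graph of $\wtilde|_{\fgt^{\ell^*}}$. Under $f$, the subvariety $\fgt^{\ell^*}$ corresponds to $G/T\times \ell^*$, and $\wtilde$ sends $(gT,h)$ to $(gwT, w\inverse h)$, i.e. in the $\fgrst$-picture it sends $(g\cdot h, gB)$ to $(g\cdot h, gwBw\inverse g\inverse)$ by the formula recorded just before the lemma statement. Hence the graph of $\wtilde|_{\fgt^{\ell^*}}$ is precisely the set of pairs $\bigl((g\cdot h, gBg\inverse),\ (g\cdot h, gwBw\inverse g\inverse)\bigr)$ with $g$ in $G$ and $h$ in $\ell^*$ — the same description obtained in the previous paragraph. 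The only thing to check for the two set descriptions to match as subsets of $\fgt\times\fgt$ is that a pair of this form indeed lies in $\Zhat$, i.e. that $g\cdot h$ lies in $\Lie(gBg\inverse)\cap\Lie(gwBw\inverse g\inverse)$; but $h\in\ft\subseteq\fb$ gives $g\cdot h\in\Lie(gBg\inverse)$, and $h = w(w\inverse h)$ with $w\inverse h\in\ft\subseteq\fb$ gives $g\cdot h = gw\cdot(w\inverse h)\in\Lie(gwBw\inverse g\inverse)$. So $\Lambda_w^{\ell^*}$ and the graph coincide as sets, and since both are locally closed subvarieties of the smooth variety $\fgt\times\fgt$ defined by the same point set, they coincide as varieties.

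I expect the main (modest) obstacle to be purely bookkeeping: keeping straight the two models of $\fgrst$ — the "intrinsic" one as pairs $(x,B')$ and the "parametrized" one $G/T\times\ft_\reg$ via $f$ — and making sure the $G$-action, the map $\nu$, and the twist $\wtilde$ are transported consistently between them, so that the formula $\wtilde(g\cdot h, gB) = (g\cdot h, gwBw\inverse g\inverse)$ is applied correctly. There is no hard geometry here once Lemma \ref{lemsat} is in hand; the content is just that the $G$-orbit structure of $\Lambda_w^h$ forces the second Borel subgroup to be the $w$-conjugate of the first, which is exactly what $\wtilde$ does. One should also note explicitly that $\Lambda_w^{\ell^*}$, being a disjoint union of single $G$-orbits parametrized by $\ell^*$, is a graph over $\fgt^{\ell^*}$ in the strong sense that the projection to the first factor is an isomorphism onto $\fgt^{\ell^*}$, which is what "is the graph of" is meant to assert.
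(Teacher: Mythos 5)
Your proposal is correct and takes essentially the same route as the paper: both extract the description $\Lambda_w^{\ell^*} = \{\,((g\cdot h, gBg\inverse),(g\cdot h, gwBw\inverse g\inverse)) \mid g\in G,\ h\in\ell^*\,\}$ from the orbit computation in Lemma \ref{lemsat}, identify $\fgt^{\ell^*}$ similarly, and match this against the explicit formula $\wtilde(g\cdot h, gB)=(g\cdot h, gwBw\inverse g\inverse)$. The extra checks you add (membership in $\Zhat$, the projection being an isomorphism) are harmless elaborations of the same argument.
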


\begin{proof}
  It follows from Lemma \ref{lemsat} that
  \begin{align*}
    \Lambda_w^{\ell^*} &= \{\, (g \cdot h, gBg\inverse, gwBw\inverse
    g\inverse) \in \fgrs\times \CB\times \CB \mid h\in \ell^*,\, g\in
    G\,\}\\
    &= \{\, ((g\cdot h, gBg\inverse), (g\cdot h, gwBw\inverse
    g\inverse)) \in \fgt\times \fgt \mid h\in \ell^*,\, g\in G\,\}.
  \end{align*}
  The argument in the proof of Lemma \ref{lemsat} shows that
  $\fgt^{\ell^*} = \{\, (g\cdot h, gBg\inverse) \mid h\in \ell^*,\,
  g\in G\,\}$ and by definition $\wtilde(g \cdot h, gB)= (g\cdot h,
  gwBw\inverse g\inverse)$. Therefore, $\Lambda_w^{\ell^*}$ is the
  graph of $\wtilde|_{ \fgt^{\ell^*}}$.
\end{proof}

\begin{corollary}\label{fib}
  The map $\nu_w\colon \Lambda_w^{\ell^*}\to \ell^*$ is a locally
  trivial fibration with fibre isomorphic to $G/T$.
\end{corollary}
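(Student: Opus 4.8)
The plan is to reduce Corollary \ref{fib} to the analogous assertion about the map $\nu\colon \fgt^{\ell^*}\to \ell^*$, which will turn out to be a \emph{trivial} fibration, and then transport that conclusion back to $\nu_w$ via the graph description of $\Lambda_w^{\ell^*}$ obtained in Lemma \ref{lemgraph}. First I would note that projection on the first two factors, $(x,B',B'')\mapsto (x,B')$, restricts to an isomorphism $\Lambda_w^{\ell^*}\xrightarrow{\ \cong\ }\fgt^{\ell^*}$: this is exactly the content of Lemma \ref{lemgraph}, since $\Lambda_w^{\ell^*}$ is the graph of $\wtilde|_{\fgt^{\ell^*}}$. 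Under this isomorphism, together with the identification of the graph $\Gamma_{w\inverse}$ restricted to $\ell^*$ with $\ell^*$ via $(h,w\inverse h)\mapsto h$, the map $\nu_w$ is carried to $\nu|_{\fgt^{\ell^*}}\colon \fgt^{\ell^*}\to \ell^*$; indeed $\nu_w(x,B',B'')=(\nu(x,B'),\,w\inverse\nu(x,B'))$, whose first coordinate is $\nu(x,B')$. So it suffices to show that $\nu\colon\fgt^{\ell^*}\to\ell^*$ is a locally trivial fibration with fibre $G/T$.

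Next I would check that $\fgt^{\ell^*}\subseteq \fgrst$. If $(x,B')\in\fgt$ with $\nu(x,B')=h\in\ell^*\subseteq\ft_\reg$, write $x=g\cdot(h+n)$ with $gBg\inverse=B'$ and $n\in\fu$. As in the computation in the proof of Lemma \ref{lemsat}, regularity of $h$ forces $h+n$ to lie in the single $U$-orbit of $h$, so $x$ is $G$-conjugate to the regular semisimple element $h$; hence $x\in\fgrs$ and $(x,B')\in\fgrst$. Thus $\fgt^{\ell^*}=\nu\inverse(\ell^*)$ is contained in $\fgrst$.

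Then I would invoke the isomorphism $f\colon G/T\times\ft_\reg\xrightarrow{\ \cong\ }\fgrst$, $(gT,h)\mapsto(g\cdot h,gB)$. Since $\nu(g\cdot h,gBg\inverse)=h$ for $h\in\ft$, the map $f$ intertwines $\nu$ with the second projection $G/T\times\ft_\reg\to\ft_\reg$. Restricting to the preimage of $\ell^*$ yields an isomorphism $\fgt^{\ell^*}\cong G/T\times\ell^*$ carrying $\nu|_{\fgt^{\ell^*}}$ to the projection $G/T\times\ell^*\to\ell^*$. This projection is a trivial (in particular, locally trivial) fibration with fibre $G/T$, and composing with the isomorphisms of the preceding paragraphs identifies $\nu_w\colon\Lambda_w^{\ell^*}\to\ell^*$ with it, which is the corollary.

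I do not expect a genuine obstacle here; the statement is essentially a bookkeeping consequence of Lemma \ref{lemgraph} and the isomorphism $f$. The only points that need (minor) care are the verification that $\nu\inverse(\ell^*)$ really lands in the regular semisimple locus $\fgrst$ — this is precisely where the hypothesis $\ell^*=\ell\cap\ft_\reg$ is used — and keeping the several identifications straight ($\Lambda_w^{\ell^*}\cong\fgt^{\ell^*}$, $\Gamma_{w\inverse}|_{\ell^*}\cong\ell^*$, and $\fgrst\cong G/T\times\ft_\reg$) so that $\nu_w$ is correctly traced through to the projection $G/T\times\ell^*\to\ell^*$.
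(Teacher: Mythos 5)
Your proof is correct and follows essentially the same route as the paper: deduce from Lemma \ref{lemgraph} that $\Lambda_w^{\ell^*}$ is a graph, hence isomorphic to $\fgt^{\ell^*}$, and then use the identification $\fgt^{\ell^*}\cong G/T\times\ell^*$ coming from $f$ to recognize $\nu_w$ as the projection to $\ell^*$. The paper states this in one line, while you spell out the intermediate verifications (that $\fgt^{\ell^*}\subseteq\fgrst$ and that the identifications really carry $\nu_w$ to the projection), but the underlying argument is identical.
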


\begin{proof}
  This follows immediately from the lemma and the fact that
  $\fgt^{\ell^*} \cong G /T\times \ell^*$.
\end{proof}

\subsection*{The specialization map}
Suppose that $w$ is in $W$ and that $\ell$ is a one-dimensional
subspace of $\ft$ with $\ell^* = \ell\setminus \{0\} =\ell\cap
\ft_\reg$.  As in \cite{fultonmacpherson:categorical} and
\cite[\S2.6.30] {chrissginzburg:representation}, $\lim \colon H_{i+2}(
\Lambda_w^{\ell^*}) \to H_i(Z)$ is the composition of three maps,
defined as follows.

As a vector space over $\BBR$, $\ell$ is two-dimensional. Fix an
$\BBR$-basis of $\ell$, say $\{v_1, v_2\}$. Define $P$ to be the open
half plane $\BBR_{>0} v_1\oplus \BBR v_2$, define $I_{>0}$ to be the
ray $\BBR_{>0} v_1$, and define $I$ to be the closure of $I_{>0}$, so
$I= \BBR_{\geq 0} v_1$.

Since $P$ is an open subset of $\ell^*$, $\Lambda_w^{P}$ is an open
subset of $\Lambda_w^{\ell^*}$ and so there is a restriction map in
Borel-Moore homology $\res\colon H_{i+2}(\Lambda_w^{\ell^*}) \to
H_{i+2} (\Lambda_w^{P})$.

The projection map from $P$ to $I_{>0}$ determines an isomorphism in
Borel-Moore homology $\psi\colon H_{i+2} (\Lambda_w^{P}) \to H_{i+1}
(\Lambda_w^{I_{>0}})$.

Since $I= I_{>0}\coprod \{0\}$, we have $\Lambda_w^I= \Lambda_w^{
  I_{>0}} \coprod \Lambda_w^0= \Lambda_w^{I_{>0}} \coprod Z$, where
$Z$ is closed in $\Lambda_w^I$. The connecting homomorphism of the
long exact sequence in Borel-Moore homology arising from the partition
$\Lambda_w^I= \Lambda_w^{I_{>0}} \coprod Z$ is a map $\partial \colon
H_{i+1} (\Lambda_w^{I_{>0}}) \to H_i(Z)$.

By definition, $\lim= \partial \circ \psi\circ \res$.

Now fix $j$ with $1\leq j\leq N$ and set $w=w_j$.

Consider the intersection $\Lambda_w^I\cap \Zhat_j= (\Lambda_w^{
  I_{>0}} \cap \Zhat_j) \coprod (Z\cap \Zhat_j)$. Then $Z\cap \Zhat_j$
is closed in $\Lambda_w^I\cap \Zhat_j$ and by construction,
$\Lambda_w^{I_{>0}} \subseteq \Zhat_j$ and $Z\cap \Zhat_j=Z_j$. Thus,
$\Lambda_w^I\cap \Zhat_j= \Lambda_w^{ I_{>0}} \coprod Z_j$.  Let
$\partial_j\colon H_{i+1}(\Lambda_w^{ I_{>0}}) \to H_i(Z_j)$ be the
connecting homomorphism of the long exact sequence in Borel-Moore
homology arising from this partition. Because the long exact sequence
in Borel-Moore homology is natural, we have a commutative square:
\[
\xymatrix{ H_{i+1}(\Lambda_w^{I_{>0}}) \ar[r]^-{\partial} & H_i(Z) \\
  H_{i+1}(\Lambda_w^{I_{>0}}) \ar[r]_-{\partial_j} \ar@{=}[u] &
  H_i(Z_j) \ar[u]_{r_*} }
\]
This proves the following lemma.

\begin{lemma}
  Fix $j$ with $1\leq j\leq N$ and set $w=w_j$. Then $\partial \colon
  H_{i+1} (\Lambda_w^{I_{>0}}) \too H_i(Z)$ factors as $r_* \circ
  \partial_j$ where $\partial_j\colon H_{i+1} (\Lambda_w^{I_{>0}}) \too
  H_i(Z_j)$ is the connecting homomorphism of the long exact sequence
  arising from the partition $\Lambda_w^I\cap \Zhat=
  \Lambda_w^{I_{>0}} \coprod Z_j$.
\end{lemma}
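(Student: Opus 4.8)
The plan is to recognize both $\partial$ and $\partial_j$ as the connecting maps of the Borel--Moore long exact sequences attached to two nested closed/open decompositions, and then to extract the factorization $\partial = r_*\circ\partial_j$ from the naturality of that sequence under the closed embedding relating the two decompositions. Throughout, fix $w = w_j$ as in the statement.

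First I would establish the partition
\[
\Lambda_w^I\cap \Zhat_j = \Lambda_w^{I_{>0}} \coprod Z_j .
\]
Since $I_{>0}$ is open in $I$ with complement $\{0\}$, the set $\Lambda_w^{I_{>0}}$ is open in $\Lambda_w^I$ with closed complement $\Lambda_w^0 = Z$, and intersecting with the closed subvariety $\Zhat_j$ of $\Zhat$ exhibits $\Lambda_w^I\cap\Zhat_j$ as a closed subspace of $\Lambda_w^I$ decomposed into the open part $\Lambda_w^{I_{>0}}\cap\Zhat_j$ and the closed part $Z\cap\Zhat_j$. It remains to identify these two pieces with $\Lambda_w^{I_{>0}}$ and $Z_j$. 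For the open piece, the argument in the proof of Lemma \ref{lemsat} shows that $\Lambda_w^h$ lies in the single $G$-orbit $\Zhat_w$ whenever $h$ is regular, so $\Lambda_w^{I_{>0}}\subseteq\Lambda_w^{\ell^*}\subseteq\Zhat_w$; because the linear order on $W$ refines the Bruhat order and $w=w_j$, we have $\Zhat_w = \Zhat_{w_j}\subseteq\Zhat_j$, whence $\Lambda_w^{I_{>0}}\cap\Zhat_j = \Lambda_w^{I_{>0}}$. For the closed piece, $Z\cap\Zhat_{w_i}$ is the locus in $Z$ lying over $\CO_{w_i}$, i.e.\ $Z_{w_i}$, so $Z\cap\Zhat_j = \coprod_{i\le j} Z_{w_i} = Z_j$, which is closed in $Z$.

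Next I would run the naturality argument. The inclusion $\iota\colon\Lambda_w^I\cap\Zhat_j\hookrightarrow\Lambda_w^I$ is a closed embedding, hence proper, and it carries the decomposition $\Lambda_w^{I_{>0}}\coprod Z_j$ to $\Lambda_w^{I_{>0}}\coprod Z$: on the open strata it is the identity of $\Lambda_w^{I_{>0}}$, and on the closed strata it is precisely the closed embedding $r\colon Z_j\hookrightarrow Z$. By functoriality of the Borel--Moore long exact sequence with respect to proper maps (see \cite[\S2.6]{chrissginzburg:representation}), $\iota$ induces a morphism of the two long exact sequences, a ladder of commuting squares; the square built from the connecting homomorphisms reads
\[
\partial\circ\id_{H_{i+1}(\Lambda_w^{I_{>0}})} = r_*\circ\partial_j,
\]
which is the commutative square asserted and yields $\partial = r_*\circ\partial_j$.

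The only step requiring genuine care is the identification of the strata of the partition, and in particular the inclusion $\Lambda_w^{I_{>0}}\subseteq\Zhat_j$: this is exactly where the choice of a linear order refining the Bruhat order is used, so that $\Zhat_{w_j}$ is among the components $\Zhat_{w_1},\dots,\Zhat_{w_j}$ making up $\Zhat_j$, together with Lemma \ref{lemsat}. Once the partition is in hand, the conclusion is a purely formal consequence of naturality; the remaining bookkeeping is simply to keep straight that $\iota$ restricts to the identity on the open stratum and to $r$ on the closed one, so that the square commutes in the direction claimed.
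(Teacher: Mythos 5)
Your proof is correct and follows essentially the same route as the paper: establish the partition $\Lambda_w^I\cap\Zhat_j=\Lambda_w^{I_{>0}}\coprod Z_j$ (using Lemma \ref{lemsat} to see that $\Lambda_w^{I_{>0}}\subseteq\Zhat_w\subseteq\Zhat_j$), and then invoke naturality of the Borel--Moore long exact sequence under the closed embedding into $\Lambda_w^I=\Lambda_w^{I_{>0}}\coprod Z$ to read off the commuting square $\partial=r_*\circ\partial_j$. One small slip: the containment $\Zhat_{w_j}\subseteq\Zhat_j$ is immediate from the definition $\Zhat_j=\coprod_{i\le j}\Zhat_{w_i}$ and does not need the refinement of the Bruhat order (that refinement is what makes $\Zhat_j$ and $Z_j$ closed), but this does not affect the validity of the argument.
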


It follows from the lemma that $\lim\colon H_{i+2}(\Lambda_w
^{\ell^*}) \too H_i(Z)$ factors as
\begin{equation} 
  \label{eq:fact} 
  H_{i+2}(\Lambda_w
  ^{\ell^*}) \xrightarrow{\ \res\ } H_{i+2} (\Lambda_w^{P})
  \xrightarrow{\ \psi\ }
  H_{i+1} (\Lambda_w^{I_{>0}}) \xrightarrow{\ \partial_j\ } H_i(Z_j)
  \xrightarrow{\ r_*\ } H_i(Z).
\end{equation}
Define $\lim_j \colon H_{i+2}(\Lambda_w ^{\ell^*}) \too H_i(Z_j)$ by
$\lim_j= \partial_j \circ \psi\circ \res$.

\subsection*{Specialization and restriction}
As above, fix $j$ with $1\leq j\leq N$ and a one-dimensional subspace
$\ell$ of $\ft$ with $\ell^* = \ell\setminus \{0\} =\ell\cap
\ft_\reg$.  Set $w = w_j$.

Recall the restriction map $\res_j\colon H_i(Z_j) \to H_i(Z_{w})$
from (\ref{eq:ses}).

\begin{lemma} \label{lem3.6} 
  The composition $\res_j\circ \lim\nolimits_j \colon
  H_{i+2}(\Lambda_w ^{\ell^*}) \too H_i(Z_{w})$ is surjective for
  $0\leq i\leq 4n$.
\end{lemma}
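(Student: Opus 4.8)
The plan is to trace through the factorization of $\lim_j$ in \eqref{eq:fact}, identify each map concretely on the relevant pieces, and reduce the surjectivity statement to a computation on the single stratum $\Zhat_w$ — where everything is an affine space bundle and the homology is generated by a fundamental class together with pullbacks from $\CO_w \cong \CB$.

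First I would restrict attention to the component $\Zhat_w$ inside $\Zhat$. The key point is that $\res_j\colon H_i(Z_j)\to H_i(Z_w)$ appearing in the target is, up to the identifications already set up, the restriction to the open stratum $Z_w\subseteq Z_j$. So I want to understand the composition $H_{i+2}(\Lambda_w^{\ell^*})\xrightarrow{\res}H_{i+2}(\Lambda_w^P)\xrightarrow{\psi}H_{i+1}(\Lambda_w^{I_{>0}})\xrightarrow{\partial_j}H_i(Z_j)\xrightarrow{\res_j}H_i(Z_w)$. Since restriction maps commute with one another and with the connecting homomorphisms (naturality of the long exact sequence in Borel--Moore homology for open/closed partitions), I would first observe that this composite equals the ``local'' version of $\lim$ built inside the single component $\Zhat_w$: namely $\res_j\circ\partial_j$ restricted through the partition $\Lambda_w^I\cap\Zhat_w = (\Lambda_w^{I_{>0}}\cap\Zhat_w)\amalg Z_w$, composed with the appropriate restriction and projection maps. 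By Lemma~\ref{lemsat} and Corollary~\ref{corfb}, $\Lambda_w^\ell\cap\Zhat_w \cong G\times^{B\cap{}^wB}(\ell+\fu_w)$ is an affine space bundle over $\CO_w$, so its closure behaves like the total space of a (real) cell bundle over $\CO_w$; in particular $\Lambda_w^{\ell^*}\cap\Zhat_w = \Lambda_w^{\ell^*}$ is open in it, and the whole local picture is just the product-like situation $\CO_w\times(\text{line through }0)$ fibered over $\ell$, with fibre $G/T$ collapsing onto $Z_w$ over $0$.

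The crux is then the following: in this locally trivial situation the specialization map $\lim$ (or rather $\res_j\circ\lim_j$) sends the fundamental class $[\Lambda_w^{\ell^*}]$ to the fundamental class $[Z_w]$, and more generally is compatible with pullback of classes from the base $\CO_w$. Concretely, $H_\bullet(\Lambda_w^{\ell^*})$ is a free module over $H^\bullet(\CO_w)$ generated by $[\Lambda_w^{\ell^*}]$ (Thom/Leray--Hirsch for the affine bundle $\Lambda_w^{\ell^*}\to\CO_w$ with fibre $G/T$, together with $H^\bullet(G/T)$ being generated in the relevant way over $H^\bullet(\CO_w)$ — here one uses that $H^\bullet(G/T)\cong H^\bullet(\CB)\otimes H^\bullet(T\backslash\!\!\text{-part})$, or more simply that $\ell^*\simeq \mathbb{C}^*$ contributes the extra degree handled by $\psi$), and similarly $H_\bullet(Z_w)$ is a free $H^\bullet(\CO_w)$-module on $[Z_w]$ since $Z_w\cong G\times^{B\cap{}^wB}\fu_w$ is a genuine (complex) affine bundle over $\CO_w$. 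The maps $\res,\psi,\partial_j,\res_j$ are all $H^\bullet(\CO_w)$-linear (each is induced by a base-preserving morphism or is a connecting map of a partition that respects the bundle structure over $\CO_w$), so it suffices to check that $[\Lambda_w^{\ell^*}]$ maps to a nonzero multiple of $[Z_w]$. That last fact is exactly the fibrewise statement: specializing the fundamental class of the total space of the $G/T$-bundle over $\ell^*$ down to the fibre over $0$ (which is $Z_w$, the $G/B\times{}^wG/B$-bundle obtained as $h\to 0$) gives $[Z_w]$, because specialization always preserves fundamental classes of irreducible varieties.

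The main obstacle I expect is making the ``$H^\bullet(\CO_w)$-linearity and freeness'' bookkeeping precise — in particular, verifying that $H_\bullet(\Lambda_w^{\ell^*})$ really is a free $H^\bullet(\CO_w)$-module on the fundamental class with the extra $\mathbb{C}^*$-factor accounted for correctly, and that $\psi$ (the isomorphism coming from projecting $P$ to $I_{>0}$) together with $\partial_j$ implements exactly the Gysin/boundary map that kills the $\mathbb{C}^*$ class and lands on $[Z_w]$. This is essentially a Leray--Hirsch computation combined with the standard behaviour of the Fulton--MacPherson specialization on a product, and I would organize it by first proving the statement in the trivial-bundle model $\CO_w\times(G/T\times\ell)\to\ell$ and then invoking local triviality plus $H^\bullet(\CO_w)$-linearity to descend to the actual variety. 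Once $\res_j\circ\lim_j$ is identified as an $H^\bullet(\CO_w)$-module map hitting $[Z_w]$, surjectivity onto $H_i(Z_w)$ for all $0\le i\le 4n$ is immediate, since $H_\bullet(Z_w)$ is generated over $H^\bullet(\CO_w)\cong H^\bullet(\CB)$ by $[Z_w]$ and all of $H^\bullet(\CB)$ lies in the image of restriction from $H^\bullet(\Lambda_w^{\ell^*})$ (pull back from $\CO_w$).
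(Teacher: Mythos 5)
Your overall plan is close to the paper's: factor $\res_j\circ\lim_j$ via \eqref{eq:fact}, use the bundle picture over $\CO_w$ from Corollary~\ref{corfb}, and observe that the fibrewise directions ($\res$, $\psi$) are harmless so that everything comes down to understanding the connecting map into $H_\bullet(Z_w)$. But there is a genuine gap at the crux.

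The step you describe as ``that last fact is exactly the fibrewise statement: specialization preserves fundamental classes'' is precisely what needs to be \emph{proved}, and it is not an instance of a standard general fact. The fibre of $\Lambda_w^{\ell}\to\ell$ over $0$ is all of $Z$, not $Z_w$; $Z$ is reducible with $|W|$ components, and the specialization of $[\Lambda_w^{\ell^*}]$ into $H_{4n}(Z)$ is $\lambda_w$, which is a priori a linear combination $\sum_v m_v[\overline{Z_v}]$ of component classes. What one needs is that, after restriction to the open stratum $Z_w$, the coefficient $m_w$ is nonzero — equivalently, that the boundary map $\partial_w\colon H_{i+1}(\Lambda_w^{I_{>0}})\to H_i(Z_w)$ from the partition $\Lambda_w^I\cap\Zhat_w = \Lambda_w^{I_{>0}}\amalg Z_w$ is surjective (in top degree, hits $[Z_w]$). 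The paper proves this by a real-vector-bundle argument: writing $E_I=G\times^{B\cap{}^wB}(\BBR_{\geq 0}v_1+\fu_w)$ inside the smooth real bundle $E_\BBR=G\times^{B\cap{}^wB}(\BBR v_1+\fu_w)$, identifying $H_\bullet(E_I)$ with the relative cohomology $H^{4n+1-\bullet}(E_\BBR,E_\BBR\setminus E_I)$, and noting that both $E_\BBR$ and $E_\BBR\setminus E_I$ deformation retract to $G/(B\cap{}^wB)$, so $H_\bullet(E_I)=0$ and hence $\partial_w$ (which is $\partial_E$) is an isomorphism. Without something of this sort, your reduction to ``fundamental class goes to fundamental class'' is assuming what it should be establishing.

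Two smaller inaccuracies worth fixing even if you supply the missing argument. First, $H_\bullet(\Lambda_w^{\ell^*})$ is \emph{not} a free $H^\bullet(\CO_w)$-module of rank one on $[\Lambda_w^{\ell^*}]$: the fibre of $\Lambda_w^{\ell^*}\to\CO_w$ is $\ell^*+\fu_w\simeq\BBC^*\times\BBC^{\dim\fu_w}$, so the module has rank two (this is the ``extra degree'' you wave at; the paper handles it cleanly by Lemma~\ref{lemalways}, which shows $\res$ is surjective and $\psi$ is an isomorphism, so one may pass to $\Lambda_w^{I_{>0}}$ where the rank really is one). Second, the collapse ``$G/T$-bundle over $\ell^*$ onto $Z_w$ at $0$'' only happens after intersecting with $\Zhat_w$; otherwise the special fibre is $Z$, not $Z_w$, and keeping this straight is exactly what the diagram comparing $\partial_j$ with $\partial_w$ (and the already-established surjectivity of $\res_j$) is for.
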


\begin{proof}
  Using (\ref{eq:fact}), $\res_j\circ \lim\nolimits_j$ factors as
  \[
  H_{i+2}(\Lambda_w ^{\ell^*}) \xrightarrow{\ \res\ } H_{i+2}
  (\Lambda_w^{P}) \xrightarrow{\ \psi\ } H_{i+1} (\Lambda_w^{I_{>0}})
  \xrightarrow{\ \partial_j\ } H_i(Z_j) \xrightarrow{\ \res_j\ }
  H_i(Z_w).
  \]
  Lemma \ref{lemalways} below shows that $\res$ is always surjective
  and the map $\psi$ is an isomorphism, so we need to show that the
  composition $\res_j\circ\ \partial_j$ is surjective.

  Consider $\Lambda_w^I\cap \Zhat_w= (\Lambda_w^I\cap \Zhat_j)\cap
  \Zhat_w= \Lambda_w^{I_{>0}} \coprod Z_w$. Then $\Lambda_w^{I_{>0}}$
  is open in $\Lambda_w^I \cap \Zhat_w$ and we have a commutative
  diagram of long exact sequences
  \[
  \xymatrix{\dotsm \ar[r]& H_{i+1}( \Lambda_w^I \cap \Zhat_w) \ar[r]&
    H_{i+1}( \Lambda_w^{I_{>0}}) \ar[r]^{\partial_w}& H_i(Z_w) \ar[r]&
    \dotsm \\
    \dotsm \ar[r]& H_{i+1}( \Lambda_w^I \cap \Zhat_j) \ar[r] \ar[u]&
    H_{i+1}( \Lambda_w^{I_{>0}}) \ar[r]^{\partial_j} \ar@{=}[u] &
    H_i(Z_j) \ar[r] \ar[u]_{\res_j}& \dotsm}
  \] 
  where $\partial_w$ is the connecting homomorphism of the long exact
  sequence arising from the partition $\Lambda_w^I\cap \Zhat_w=
  \Lambda_w^{I_{>0}} \coprod Z_w$. We have seen at the beginning of
  this section that $\res_j$ is surjective and so it is enough to show
  that $\partial_w$ is surjective.

  Recall that $\{v_1, v_2\}$ is an $\BBR$-basis of $\ell$ and $I=
  \BBR_{\geq 0} v_1$. Define
  \begin{align*}
    E_{I} & = G\times^{{B\cap {}^wB}} \left( \BBR_{\geq 0} v_1+
      \fu_w \right),\\
    E_{I_{>0}} & = G\times^{{B\cap {}^wB}} \left( \BBR_{>0} v_1+
      \fu_w \right),\ \text{and}\\
    E_{0} & = G\times^{{B\cap {}^wB}} \fu_w.
  \end{align*}
  It follows from Corollary \ref{corfb} that $E_I \cong \Lambda_w^I$,
  $E_{I_{>0}} \cong \Lambda_w^{I_{>0}}$, and $E_0 \cong Z_w$, so the
  long exact sequence arising from the partition $\Lambda_w^I\cap
  \Zhat_w =\Lambda_w^{I_{>0}} \coprod Z_w$ may be identified with the
  long exact sequence arising from the partition $E_I= E_{I_{>0}}
  \coprod E_0$:
  \[
  \xymatrix@1{\dotsm \ar[r]& H_{i+1}(E_I) \ar[r]& H_{i+1}(E_{I_{>0}})
    \ar[r]^-{\partial_E}& H_i(E_0) \ar[r]& \dotsm }
  \]
  Therefore, it is enough to show that $\partial_E$ is surjective. In
  fact, we show that $H_\bullet(E_I) = 0$ and so $\partial_E$ is an
  isomorphism.

  Define $E_{\BBR}= G\times^{{B\cap {}^wB}} \left( \BBR v_1+
    \fu_w\right)$. Then $E_\BBR$ is a smooth, real vector bundle over
  $G/ B\cap {}^wB$ and so $E_\BBR$ is a smooth manifold containing
  $E_I$ as a closed subset. We may apply \cite[2.6.1]
  {chrissginzburg:representation} and conclude that $H_i(E_I) \cong
  H^{4n+1-i}( E_\BBR, E_\BBR \setminus E_I)$.

  Consider the cohomology long exact sequence of the pair $(E_\BBR,
  E_\BBR \setminus E_I)$. Since $E_\BBR$ is a vector bundle over $G/
  B\cap {}^wB$, it is homotopy equivalent to $G/ B\cap {}^wB$.
  Similarly, $E_\BBR \setminus E_I \cong G\times^{{B\cap {}^wB}}
  \left( \BBR_{<0} v_1+ \fu_w\right)$ and so is also homotopy
  equivalent to $G/ B\cap {}^wB$. Therefore, $H^i(E_\BBR) \cong H^i(
  E_\BBR \setminus E_I)$ and it follows that the relative cohomology
  group $H^i( E_\BBR, E_\BBR \setminus E_I)$ is trivial for every $i$.
  Therefore, $H_\bullet(E_I) = 0$, as claimed.

  This completes the proof of the lemma.
\end{proof}

\begin{corollary}\label{lim1}
  The specialization map $\lim\nolimits_1 \colon H_{i+2}(\Lambda_1
  ^{\ell^*}) \too H_i(Z_1)$ is surjective for $0\leq i\leq 4n$.
\end{corollary}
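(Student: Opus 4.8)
The plan is to deduce this directly from Lemma \ref{lem3.6} by specializing to the case $j=1$, where the various subvarieties collapse onto $Z_1$ itself. Recall that in the fixed linear order on $W$ we have $w_1 = 1$, so $Z_1 = Z_{w_1}$. Thus when $j=1$ we have $Z_j = Z_1 = Z_{w_1}$, and the restriction map $\res_1 \colon H_i(Z_1) \to H_i(Z_{w_1})$ appearing in Lemma \ref{lem3.6} is simply the identity map on $H_i(Z_1)$, since the open embedding $Z_{w_1} \subseteq Z_1$ is an equality. Likewise, $\lim\nolimits_j$ with $j=1$ is the map $\lim\nolimits_1 \colon H_{i+2}(\Lambda_1^{\ell^*}) \to H_i(Z_1)$, defined via the factorization (\ref{eq:fact}) as $\lim\nolimits_1 = \partial_1 \circ \psi \circ \res$.

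With these identifications, the composition $\res_j \circ \lim\nolimits_j$ of Lemma \ref{lem3.6} becomes, for $j=1$ and $w = w_1 = 1$, just $\lim\nolimits_1$ itself. Therefore the surjectivity of $\res_1 \circ \lim\nolimits_1 \colon H_{i+2}(\Lambda_1^{\ell^*}) \to H_i(Z_{w_1}) = H_i(Z_1)$ asserted by Lemma \ref{lem3.6} for $0 \leq i \leq 4n$ is precisely the assertion that $\lim\nolimits_1$ is surjective for $0 \leq i \leq 4n$, which is the statement of the corollary.

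There is essentially no obstacle here: the content is entirely in Lemma \ref{lem3.6}, and the corollary is the observation that its hypotheses degenerate nicely at the bottom of the filtration. The only point requiring a moment's care is confirming that the labelling conventions match up — namely that $Z_1$ in the statement of the corollary (the diagonal $\{(x,B',B') \mid x \in \Lie(B')\}$) coincides with $Z_{w_1}$ in the notation $Z_j = \coprod_{i=1}^j Z_{w_i}$ when $j = 1$, which holds because $w_1 = 1$ and $Z_1 = Z_{w_1} = \pi_Z\inverse(\CO_1)$ with $\CO_1$ the diagonal $G$-orbit in $\CB \times \CB$. Given that, the corollary follows immediately from Lemma \ref{lem3.6}.
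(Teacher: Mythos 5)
Your proposal is correct and follows exactly the paper's own argument: apply Lemma \ref{lem3.6} with $j=1$, observing that $Z_1 = Z_{w_1}$ so that $\res_1$ is the identity map, whence $\res_1 \circ \lim\nolimits_1 = \lim\nolimits_1$ is surjective. The paper states this in a single sentence; your write-up is just a more explicit version of the same reasoning.
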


\begin{proof}
  This follows from Lemma \ref{lem3.6}, because $Z_1= Z_{w_1}$ and so
  $\res_1$ is the identity map.
\end{proof}

The next lemma is true for any specialization map.

\begin{lemma}\label{lemalways}
  The restriction map $\res\colon H_{i+2}(\Lambda_w^{\ell^*}) \too
  H_{i+2} (\Lambda_w^{P})$ is surjective for every $w$ in $W$ and
  every $i\geq 0$.
\end{lemma}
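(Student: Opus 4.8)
The plan is to exploit the product structure of $\Lambda_w^{\ell^*}$ furnished by Lemma~\ref{lemgraph} to reduce the assertion to an elementary computation with the Borel--Moore homology of the real surfaces $\ell^*$ and $P$.

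First I would fix a product model for the pair $\Lambda_w^P\subseteq\Lambda_w^{\ell^*}$. By Lemma~\ref{lemgraph}, $\Lambda_w^{\ell^*}$ is the graph of $\wtilde|_{\fgt^{\ell^*}}$, so projection onto the first copy of $\fgt$ is an isomorphism $\Lambda_w^{\ell^*}\xrightarrow{\ \cong\ }\fgt^{\ell^*}$; composing with the isomorphism $f\colon G/T\times\ft_\reg\xrightarrow{\ \cong\ }\fgrst$, $(gT,h)\mapsto(g\cdot h,gB)$, which satisfies $f(G/T\times\ell^*)=\fgt^{\ell^*}$ (as in the proof of Lemma~\ref{lemgraph}), gives an isomorphism $\Lambda_w^{\ell^*}\cong G/T\times\ell^*$. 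Since $P\subseteq\ft_\reg$ and $P$ is open in $\ell^*$, the very same chain of isomorphisms carries the open subset $\Lambda_w^P\subseteq\Lambda_w^{\ell^*}$ onto $G/T\times P\subseteq G/T\times\ell^*$. Hence $\res$ is identified with the restriction map
\[
H_{i+2}(G/T\times\ell^*)\too H_{i+2}(G/T\times P)
\]
induced by the open embedding $\id_{G/T}\times j$, where $j\colon P\hookrightarrow\ell^*$ is the inclusion.

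Next I would apply the Künneth formula for Borel--Moore homology with $\BBQ$-coefficients. Since $G/T$ is a finite-dimensional variety, $H_p(G/T)$ is finite-dimensional for every $p$, so there are Künneth isomorphisms $H_k(G/T\times\ell^*)\cong\bigoplus_{p+q=k}H_p(G/T)\otimes H_q(\ell^*)$ and $H_k(G/T\times P)\cong\bigoplus_{p+q=k}H_p(G/T)\otimes H_q(P)$, natural with respect to restriction to open subsets. Under these the map above becomes $\bigoplus_{p+q=k}\id_{H_p(G/T)}\otimes j_q^*$, with $j_q^*\colon H_q(\ell^*)\to H_q(P)$ the restriction map; so it suffices to prove each $j_q^*$ is surjective. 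Now $\ell^*$ is a connected, oriented, two-dimensional real manifold (diffeomorphic to $\BBR^2\setminus\{0\}$), so $H_q(\ell^*)=0$ unless $q\in\{1,2\}$ and $H_2(\ell^*)\cong\BBQ$ is spanned by the fundamental class $[\ell^*]$; and $P$, being an open half-plane, is diffeomorphic to $\BBR^2$, so $H_q(P)=0$ unless $q=2$ and $H_2(P)\cong\BBQ$ is spanned by $[P]$. As the restriction of a fundamental class to an open subset is again a fundamental class, $j_2^*([\ell^*])=[P]$, so $j_2^*$ is an isomorphism, while $j_q^*$ is vacuously surjective for $q\neq2$. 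Consequently, for $k=i+2\geq2$ the group $H_{i+2}(G/T\times P)$ equals the single summand $H_i(G/T)\otimes H_2(P)$, onto which $\id\otimes j_2^*$ maps the summand $H_i(G/T)\otimes H_2(\ell^*)$ isomorphically; thus $\res$ is surjective for every $i\geq0$.

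The only delicate point is the compatibility claim in the first step — that the isomorphisms $\Lambda_w^{\ell^*}\cong G/T\times\ell^*$ and $\Lambda_w^P\cong G/T\times P$ respect the open embeddings — and this is immediate from the explicit graph description in Lemma~\ref{lemgraph}. What remains, the naturality of the Künneth isomorphism under open restriction and the one-line computations of $H_\bullet(\ell^*)$ and $H_\bullet(P)$, is entirely standard, so I do not anticipate a real obstacle.
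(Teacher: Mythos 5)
Your proof is correct and follows essentially the same route as the paper: identify $\Lambda_w^{\ell^*}\cong G/T\times\ell^*$ and $\Lambda_w^P\cong G/T\times P$, apply the K\"unneth formula, and reduce to the surjectivity of the restriction $H_2(\ell^*)\to H_2(P)$ between one-dimensional spaces. The only cosmetic difference is that you spell out why the restriction $H_2(\ell^*)\to H_2(P)$ is nonzero (fundamental class restricts to fundamental class), where the paper simply asserts it; otherwise the two arguments are the same.
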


\begin{proof}
  There are homeomorphisms $\Lambda_w^{\ell^*} \cong G/T \times
  \ell^*$ and $\Lambda_w^{P} \cong G/T \times P$. By definition, $P$
  is an open subset of $\ell^*$ and so there is a restriction map
  $\res\colon H_2(\ell^*)\to H_2(P)$. This map is a non-zero linear
  transformation between one-dimensional $\BBQ$-vector spaces so it is
  an isomorphism.

  Using the K\"unneth formula we get a commutative square where the
  horizontal maps are isomorphisms and the right-hand vertical map is
  surjective:
  \[
  \xymatrix{ H_{i+2}(\Lambda_w^{\ell^*}) \ar[rr]^-{\cong}
    \ar[d]_{\res} && H_i(G/T)\otimes H_2(\ell^*) + H_{i+1}(G/T)\otimes
    H_1(\ell^*)
    \ar[d]^{\id \otimes \res + 0} \\
    H_{i+2} (\Lambda_w^{P}) \ar[rr]_-{\cong} && H_i(G/T) \otimes
    H_2(P) }
  \]
  It follows that $\res\colon H_{i+2}(\Lambda_w^{\ell^*}) \to H_{i+2}
  (\Lambda_w^{P})$ is surjective.
\end{proof}

\subsection*{Proof of Lemma \ref{surjj}}
Fix $i$ with $0\leq i\leq 4n$. We show that the image of the
convolution map $* : H_i(Z_1) \otimes H_{4n}(Z_j) \too H_i(Z)$ is
precisely $H_i(Z_j)$ for $1\leq j\leq N$ using induction on $j$.

For $j=1$, $H_{4n}(Z_1)$ is one-dimensional with basis
$\{\lambda_1\}$. It follows from Theorem \ref{cg}(c) that $\lambda_1$
is the identity in $H_\bullet (Z)$ and so clearly the image of the
convolution map $H_i(Z_1) \otimes H_{4n}(Z_1) \too H_i(Z)$ is
precisely $H_i(Z_1)$.

Assume that $j>1$ and set $w=w_j$. We will complete the proof using a
commutative diagram:
\begin{equation}
  \label{eq:5}
  \xymatrix{ 0\ar[r]& H_i(Z_1) \otimes H_{4n}(Z_{j-1})
    \ar[r]^-{\id\otimes (r_j)_*} \ar[d]_{*} &H_i(Z_1) \otimes
    H_{4n}(Z_j) \ar[r]^-{\id\otimes \res_j} \ar[d]_{*} &H_i(Z_1)
    \otimes H_{4n}(Z_{w}) \ar[r] \ar[d]_{*} & 0 \\
    0\ar[r]& H_i(Z_{j-1}) \ar[r]^-{(r_j)_*} &H_i(Z_j)
    \ar[r]^-{\res_j} &H_i(Z_{w}) \ar[r]& 0} 
\end{equation}
and the Five Lemma. We saw in (\ref{eq:ses}) that the bottom row is
exact and it follows that the top row is also exact. By induction, the
convolution product in $H_\bullet(Z)$ determines a surjective map $* :
H_i(Z_1) \otimes H_{4n}(Z_{j-1}) \too H_i(Z_{j-1})$. To conclude from
the Five Lemma that the middle vertical map is a surjection, it
remains to define the other vertical maps so that the diagram commutes
and to show that the right-hand vertical map is a surjection.

First we show that the image of the map $H_i(Z_1) \otimes
H_{4n}(Z_{j}) \too H_i(Z_{j})$ determined by the convolution product
in $H_\bullet(Z)$ is contained in $H_i(Z_j)$. It then follows that the
middle vertical map in (\ref{eq:5}) is defined and so by exactness
there is an induced map from $H_i(Z_1) \otimes H_{4n}(Z_w)$ to
$H_i(Z_w)$ so that the diagram (\ref{eq:5}) commutes. Second we show
that the right-hand vertical map is a surjection.

By Lemma \ref{lemgraph}, $\Lambda_{1}^{\ell^*}$ is the graph of the
identity map of $\fgt^{\ell^*}$, and $\Lambda_w^{ \ell^*}$ is the
graph of $\wtilde|_{\fgt^{\ell^*}}$.  Therefore, $\Lambda_{1}^{\ell^*}
\circ \Lambda_w^{ \ell^*}= \Lambda_w^{ \ell^*}$ and there is a
convolution product
\[
H_{i+2}( \Lambda_{1}^{ \ell^*}) \otimes H_{4n+2}( \Lambda_w^{ \ell^*})
\xrightarrow{\,\ *\ \,} H_{i+2}( \Lambda_w^{\ell^*}).
\]
Suppose $a$ is in $H_i(Z_1)$. Then by Corollary \ref{lim1}, $a=
\lim_1(a_1)$ for some $a_1$ in $H_{i+2}( \Lambda_1^{ \ell^*})$.  It is
shown in \cite[Proposition 2.7.23]{chrissginzburg:representation} that
specialization commutes with convolution, so $\lim(a_1*
[\Lambda_w^{\ell^*}])= \lim(a_1)* \lim([\Lambda_w^{\ell^*}])=
a*\lambda_w$. Also, $a_1* [\Lambda_w^{ \ell^*}]$ is in
$H_{i+2}(\Lambda_w^{\ell^*})$ and $\lim= r_*\circ \lim_j$ and so
$a*\lambda_w = r_* \circ \lim_j( a_1* [\Lambda_w^{\ell^*}])$ is in
$H_i(Z_j)$.  By induction, if $k<j$, then $a*\lambda_{w_k}$ is in
$H_i(Z_k)$ and so $a*\lambda_{w_k}$ is in $H_i(Z_k)$. Since the set
$\{\, \lambda_{w_k}\mid 1\leq k\leq j\,\}$ is a basis of
$H_{4n}(Z_j)$, it follows that $a*H_{4n}(Z_j)\subseteq H_i(Z_j)$.
Therefore, the image of the convolution map $H_i(Z_1) \otimes
H_{4n}(Z_{j}) \too H_i(Z)$ is contained in $H_i(Z_{j})$.

To complete the proof of Lemma \ref{surjj}, we need to show that the
induced map from $H_i(Z_1) \otimes H_{4n}(Z_w)$ to $H_i(Z_w)$ is
surjective.

Consider the following diagram:
\[
\xymatrix@1{H_{i+2}( \Lambda_{1}^{ \ell^*}) \otimes H_{4n+2}(
  \Lambda_w^{ \ell^*}) \ar[r]^-{*} \ar[d]_{\lim_1 \otimes \lim_j} &
  H_{i+2}( \Lambda_w^{ \ell^*}) \ar[d]^{\lim_j} \\
  H_i(Z_1) \otimes H_{4n}(Z_j) \ar[r]^-{*} \ar[d]_{\id\otimes
    \res_j} & H_i(Z_j) \ar[d]^{\res_j} \\
  H_i(Z_1) \otimes H_{4n}(Z_{w}) \ar[r]^-{*} & H_i(Z_{w})}
\]
We have seen that the bottom square is commutative. It follows from
the fact that specialization commutes with convolution that the top
square is also commutative. It is shown in Proposition \ref{propg1}
that the convolution product $H_{i+2}( \Lambda_{1}^{ \ell^*}) \otimes
H_{4n+2}( \Lambda_w^{ \ell^*}) \to H_{i+2}( \Lambda_w^{ \ell^*})$ is
an injection. Since $H_{i+2}( \Lambda_{1}^{ \ell^*})$ is
finite-dimensional and $H_{4n+2}( \Lambda_w^{ \ell^*})$ is
one-dimensional, it follows that this convolution mapping is an
isomorphism. Also, we saw in Lemma \ref{lem3.6} that $\res_j\circ
\lim_j$ is surjective. Therefore, the composition $\res_j\circ
\lim_j\circ \,*$ is surjective and it follows that the bottom
convolution map $H_i(Z_1) \otimes H_{4n}(Z_{w}) \to H_i(Z_{w})$ is
also surjective.  This completes the proof of Lemma \ref{surjj}.

\section{Smash Product Structure}
In this section we prove Theorem \ref{conj}. We need to show that
$\lambda_w*H_i(Z_1) *\lambda_{w\inverse} = H_i(Z_1)$ and that
$\beta\colon \Coinv_\bullet(W) \xrightarrow{\ \cong\ } H_{4n-\bullet}
(Z_1)$ is an isomorphism of $W$-algebras.

Suppose that $\ell$ is a one-dimensional subspace of $\ft$ so that
$\ell^*= \ell\setminus \{0\} = \ell\cap \ft_\reg$.  Recall that for $S
\subseteq \ft$, $\fgt^{S}= \nu\inverse(S)$. By Lemma \ref{lemgraph},
if $w$ is in $W$, then $\Lambda_w^{\ell^*}$ is the graph of the
restriction of $\wtilde$ to $\fgt^{\ell^*}$.  It follows that there is
a convolution product
\[
H_{4n+2}(\Lambda_w^{\ell^*}) \otimes H_{i+2}( \Lambda_{1}^{
  w\inverse(\ell^*)}) \otimes H_{4n+2}( \Lambda_{w}^{w
  \inverse(\ell^*)}) \xrightarrow{\,\ *\ \,} H_{i+2}(
\Lambda_{1}^{\ell^*}).
\]
Because specialization commutes with convolution, the diagram
\[
\xymatrix{H_{4n+2}(\Lambda_w^{\ell^*}) \otimes H_{i+2}( \Lambda_{1}^{
    w\inverse(\ell^*)}) \otimes H_{4n+2}( \Lambda_{w}^{w
    \inverse(\ell^*)}) \ar[r]^-{*}\ar[d]_{\lim\otimes \lim \otimes
    \lim} &H_{i+2}( \Lambda_{1}^{\ell^*}) \ar[d]^{\lim}  \\
  H_{4n}(Z) \otimes H_{i}( Z_1) \otimes H_{4n}( Z) \ar[r]_-{*}
  &H_{i}(Z)}
\]
commutes. 

We saw in Corollary \ref{lim1} that $\lim_1\colon H_{i+2}(\Lambda
_1^{\ell^*}) \to H_i(Z_1)$ is surjective. Thus, if $c$ is in
$H_i(Z_1)$, then $c= \lim(c_1)$ for some $c_1$ in $H_{i+2}(
\Lambda_{w_1} ^{w\inverse(\ell^*)})$. Therefore,
\[
\lambda_w*c*\lambda_{w\inverse}= \lim ([\Lambda_w^{\ell^*}]) *\lim
(c_1)* \lim( [\Lambda_{w\inverse}^{w \inverse(\ell^*)}]) =\lim\left(
  [\Lambda_w^{\ell^*}]*c_1* [\Lambda_{w\inverse}^{w \inverse(\ell^*)}]
\right).
\]
Since $\Lambda_w^{\ell^*}$ and $\Lambda_{w \inverse}^{w
  \inverse(\ell^*)}$ are the graphs of $\wtilde$ and $\wtilde\inverse$
respectively, and $\Lambda_1^{w\inverse(\ell^*)}$ is the graph of the
identity function, it follows that $[\Lambda_w^{\ell^*}]*c_1*
[\Lambda_{w\inverse}^{w \inverse(\ell^*)}]$ is in $H_{i+2} (\Lambda_1
^{w\inverse( \ell^*)})$ and so by (\ref{eq:fact}), $\lambda_w *c*
\lambda_{ w\inverse}$ is in $H_i(Z_1)$. This shows that $\lambda_w*
H_i(Z_1) *\lambda_{w \inverse} = H_i(Z_1)$ for all $i$.

To complete the proof of Theorem \ref{conj} we need to show that if
$w$ is in $W$ and $f$ is in $\Coinv_i(W)$, then $\beta( w\cdot f)=
\lambda_w* \beta(f) *\lambda_{w\inverse}$ where $w\cdot f$ denotes the
natural action of $w$ on $f$. To do this, we need some preliminary
results.

First, since $\Lambda_1^{\ell^*}$ is the diagonal in $\fgt^{\ell^*}
\times \fgt^{\ell^*}$, it is obvious that
\[
\delta \circ \wtilde \inverse = (\wtilde \inverse \times
  \wtilde \inverse) \circ \delta\colon \fgt^{w \inverse(\ell^*)}
  \too \Lambda_1^{\ell^*}.
\]
Therefore,
\begin{equation}
  \label{eq:wcomm}
  \delta_* \circ \wtilde \inverse_* = (\wtilde \inverse
  \times \wtilde \inverse)_* \circ \delta_*\colon H_{i}(\fgt^{w
    \inverse(\ell^*)}) \too H_{i}(\Lambda_1^{\ell^*})
\end{equation}
for all $i$. (The first $\delta$ in (\ref{eq:wcomm}) is the diagonal
embedding $\fgt^{\ell^*} \cong \Lambda_1^{\ell^*}$ and the second
$\delta$ is the diagonal embedding $\fgt^{w\inverse(\ell^*)} \cong
\Lambda_1^{w\inverse(\ell^*)}$.)

Next, with $\ell\subseteq \ft$ as above, $\fgt^{\ell}= \fgt^{\ell^*}
\coprod \nu\inverse(0)= \fgt^{\ell^*} \coprod \CNt$ and the
restriction of $\nu\colon \fgt^{\ell}\to \ell$ to $\fgt^{\ell^*}$ is a
locally trivial fibration. Therefore, there is a specialization map
$\lim\nolimits_0\colon H_{i+2} (\fgt^{\ell^*}) \to H_i(\CNt)$.  Since
$\delta_*\colon H_{i+2}( \fgt^{\ell^*}) \to H_{i+2}(
\Lambda_1^{\ell^*})$ and $\delta_*\colon H_i(Z) \to H_i(Z_1)$ are
isomorphisms, the next lemma is obvious.

\begin{lemma}\label{lemlimcomm}
  Suppose that $\ell$ is a one-dimensional subspace of $\ft$ so that
  $\ell^* = \ell\setminus \{0\} \subseteq \ft_\reg$.  Then the diagram
  \[
  \xymatrix{H_{i+2}( \fgt^{\ell^*}) \ar[r]^{\delta_*}
    \ar[d]_{\lim\nolimits_0} & H_{i+2}( \Lambda_1^{\ell^*})
    \ar[d]^{\lim\nolimits_1} \\ H_i(Z) \ar[r]_{\delta_*} & H_i(Z_1)}
  \]
  commutes.
\end{lemma}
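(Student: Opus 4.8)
\emph{Proof proposal.} The plan is to show that the Fulton--MacPherson specialization construction is natural with respect to the diagonal embedding $\delta$; since both vertical maps $\delta_*$ in the square are isomorphisms, this is exactly what the lemma asserts. First I would record the geometry relating the two families. The diagonal embedding $\delta\colon\fgt\to\fgt\times\fgt$ factors through $\Zhat$, and in fact through $\Zhat_1$, since composing with $\hat\pi$ sends $(x,B')$ to $(B',B')\in\CO_1$; moreover $\nu_1\circ\delta=\nu$ after identifying $\Gamma_1$ with $\ft$ by a projection. Using that $(B',B'')\in\CO_1$ forces $B'=B''$, one checks that for \emph{every} subset $S\subseteq\ft$ the embedding $\delta$ restricts to an isomorphism of varieties $\fgt^S=\nu\inverse(S)\xrightarrow{\ \cong\ }\Lambda_1^S\cap\Zhat_1$, and that these isomorphisms are compatible with the inclusions induced by inclusions $S'\subseteq S$. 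When $S\subseteq\ell^*$ the intersection with $\Zhat_1$ is automatic (by Lemma \ref{lemsat}, as $\Lambda_1^{\ell^*}\subseteq\Zhat_1$), so $\delta$ identifies $\fgt^{\ell^*}$, $\fgt^P$, $\fgt^{I_{>0}}$ with $\Lambda_1^{\ell^*}$, $\Lambda_1^P$, $\Lambda_1^{I_{>0}}$ respectively; when $S=I$ it identifies the partition $\fgt^I=\fgt^{I_{>0}}\coprod\CNt$ with the partition $\Lambda_1^I\cap\Zhat_1=\Lambda_1^{I_{>0}}\coprod Z_1$, the intersection with $\Zhat_1$ being precisely what cuts the closed fibre $Z=\Lambda_1^0$ down to $Z_1=\delta(\CNt)$.

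Next I would check that each of the three maps composing $\lim$ commutes with the appropriate $\delta_*$. Recall $\lim_0=\partial_0\circ\psi\circ\res$, and $\lim_1=\partial_1\circ\psi\circ\res$ with $\partial_1$ the connecting map for the partition $\Lambda_1^I\cap\Zhat_1=\Lambda_1^{I_{>0}}\coprod Z_1$. The map $\res$ sits in the cartesian square of open and closed embeddings determined by $\fgt^P=\delta\inverse(\Lambda_1^P)$, so the base-change identity for Borel-Moore homology gives $\res\circ\delta_*=\delta_*\circ\res$ (here the horizontal maps are even isomorphisms). The isomorphism $\psi$ is induced by the trivial $\BBR$-bundle projection $P\to I_{>0}$ on the base, and $\delta$ commutes with the projections to $\ell$, hence with $\psi$. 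Finally $\partial_0$ and $\partial_1$ are connecting homomorphisms of the long exact sequences attached to the respective open/closed partitions; since $\delta$ is proper and carries $(\fgt^{I_{>0}},\CNt)$ onto $(\Lambda_1^{I_{>0}},Z_1)$ with matching preimages, naturality of those long exact sequences gives $\partial_1\circ\delta_*=\delta_*\circ\partial_0$. Composing the three identities yields $\lim_1\circ\delta_*=\delta_*\circ\lim_0$, which is the assertion of Lemma \ref{lemlimcomm}.

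The only point demanding care is bookkeeping rather than mathematics: one must compare $\lim_0$ with the $Z_1$-\emph{refined} specialization $\lim_1$, not with the unrefined map $\lim\colon H_{i+2}(\Lambda_1^{\ell^*})\to H_i(Z)$. It is exactly the passage from $\Lambda_1^I$ to $\Lambda_1^I\cap\Zhat_1$ that makes the closed fibre of the $\Lambda_1$-family equal to $\delta(\CNt)=Z_1$ rather than the strictly larger $Z$; with that understood, every map occurring is either an isomorphism induced by a diagonal embedding or a standard functorial operation in Borel-Moore homology, which is why the authors can justly call the lemma obvious.
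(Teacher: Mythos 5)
Your proof is correct and fills in exactly the verification that the paper compresses into the remark that the lemma is ``obvious'' once one notes both horizontal $\delta_*$'s are isomorphisms; the intended idea in both cases is that $\delta$ gives an isomorphism of the whole family $\nu\colon\fgt^\ell\to\ell$ onto $\nu_1\colon\Lambda_1^\ell\cap\Zhat_1\to\ell$, carrying $\fgt^{\ell^*}$ to $\Lambda_1^{\ell^*}$ (using $\Lambda_1^h\subseteq\Zhat_1$ for $h$ regular, as in the proof of Lemma~\ref{lemsat}, and the diagonal description from Lemma~\ref{lemgraph}) and the closed fibre $\CNt$ to $Z_1$, so the three constituents $\res$, $\psi$, $\partial$ of the specialization each commute with $\delta_*$ by naturality. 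Your emphasis that one must compare $\lim\nolimits_0$ with the $\Zhat_1$-refined $\lim\nolimits_1$ rather than the unrefined $\lim$ is precisely the point that makes the statement well-posed, since the closed fibre of the unrefined $\Lambda_1$-family is $Z$, not $Z_1$. One small remark: the lower-left entry in the paper's displayed diagram should read $H_i(\CNt)$ rather than $H_i(Z)$, in agreement with the codomain of $\lim\nolimits_0$ introduced just before the lemma; your argument implicitly makes this correction.
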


Finally, $\CNt \times_{\CN} \CN= \CNt$ and so $Z\circ \CNt= (\CNt
\times_{\CN} \CNt) \circ (\CNt \times_{\CN} \CN)= \CNt \times_{\CN}
\CN$. Thus, there is a convolution action, $H_{4n}(Z) \otimes
H_i(\CNt) \too H_i(\CNt)$, of $H_{4n}(Z)$ on $H_i(\CNt)$.

Suppose that $w$ is in $W$ and $z$ is in $H^i(\CB)$. Then $\pi^*\circ
\pd(z)$ is in $H_{4n-i}(\CNt)$ and so $\lambda_w* (\pi^*\circ \pd(z))$
is in $H_{4n-i}(\CNt)$. It is shown in \cite[Proposition
7.3.31]{chrissginzburg:representation} that for $y$ in
$H_\bullet(\CB)$, $\lambda_w*\pi^*(y)= \epsilon_w \pi^*(w\cdot y)$
where $\epsilon_w$ is the sign of $w$ and $w\cdot y$ denotes the
action of $W$ on $H_\bullet(\CB)$ coming from the action of $W$ on
$G/T$ and the homotopy equivalence $G/T \simeq \CB$. It is also shown
in \cite[Proposition 7.3.31]{chrissginzburg:representation} that $\pd(
w\cdot z)= \epsilon_w w\cdot \pd(z)$. Therefore,
\[
\lambda_w* (\pi^*\circ \pd(z))= \epsilon_w \pi^*( w\cdot \pd(z))=
\epsilon_w \epsilon_w \pi^* \circ \pd(w\cdot z)= \pi^* \circ
\pd(w\cdot z).
\]
This proves the next lemma.

\begin{lemma}\label{lemWcomm}
  If $w$ is in $W$ and $z$ is in $H_i(\CB)$, then $\lambda_w*
  (\pi^*\circ \pd(z))= \pi^* \circ \pd(w\cdot z)$.
\end{lemma}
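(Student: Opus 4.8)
The plan is to obtain the identity as a short formal consequence of two facts of Chriss and Ginzburg, both recorded in \cite[Proposition 7.3.31]{chrissginzburg:representation}. The first is that convolution by $\lambda_w$ intertwines the Borel-Moore pullback $\pi^*$ with the $W$-action on $H_\bullet(\CB)$, \emph{up to the sign} $\epsilon_w$ of $w$: for every $y$ in $H_\bullet(\CB)$ one has $\lambda_w * \pi^*(y) = \epsilon_w\, \pi^*(w\cdot y)$ in $H_\bullet(\CNt)$, where $\lambda_w*-$ is the convolution action of $H_{4n}(Z)$ on $H_\bullet(\CNt)$ coming from the identity $Z\circ\CNt = \CNt\times_\CN\CN$, and $w\cdot y$ refers to the $W$-action transported from $G/T$ via the homotopy equivalence $G/T\simeq\CB$. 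The second is that Poincar\'e duality also intertwines the two $W$-actions (on $H^\bullet(\CB)$ and on $H_\bullet(\CB)$) up to the same sign: $\pd(w\cdot z) = \epsilon_w\, w\cdot\pd(z)$ for $z$ in $H^\bullet(\CB)$. I read the hypothesis of the statement as $z\in H^i(\CB)$, consistently with the paragraph preceding it, so that $\pd(z)\in H_{2n-i}(\CB)$ and $\pi^*\circ\pd(z)\in H_{4n-i}(\CNt)$.

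Granting these, the proof is immediate. Applying the first fact with $y=\pd(z)$ gives $\lambda_w*(\pi^*\circ\pd(z)) = \epsilon_w\,\pi^*\bigl(w\cdot\pd(z)\bigr)$. Rewriting $w\cdot\pd(z) = \epsilon_w\,\pd(w\cdot z)$ via the second fact (legitimate since $\epsilon_w^2=1$) and substituting yields
\[
\lambda_w*(\pi^*\circ\pd(z)) = \epsilon_w^2\,\pi^*\circ\pd(w\cdot z) = \pi^*\circ\pd(w\cdot z),
\]
which is the assertion. The one point worth flagging is that the sign $\epsilon_w$ enters once through the convolution with $\lambda_w$ and once through Poincar\'e duality, and the two occurrences cancel.

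I expect the only real work to be the bookkeeping of conventions, i.e.\ checking that the hypotheses of \cite[Proposition 7.3.31]{chrissginzburg:representation} apply here exactly as stated. Concretely: one should confirm that the convolution action of $H_{4n}(Z)$ on $H_\bullet(\CNt)$ used above, together with the identification $H_{4n}(Z)\cong\BBQ W$ sending $\lambda_w$ to $w$ from Theorem \ref{cg}, agrees with the action in \cite{chrissginzburg:representation}; one should keep track of the two meanings of $\pi^*$---the cup-product-compatible pullback $H^\bullet(\CB)\to H^\bullet(\CNt)$ and the flat pullback $H_\bullet(\CB)\to H_{\bullet+2n}(\CNt)$---noting that they are interchanged by $\pd$ through the identity $\pd\circ\pi^* = \pi^*\circ\pd$ already established in \S3, so that the ambiguity in the displayed formulas is harmless; and one should make sure the $W$-action on $H_\bullet(\CB)$ appearing in both cited identities is the same one, namely that coming from $G/T\simeq\CB$. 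Once these identifications are in place there is no further obstacle, and the computation above finishes the proof.
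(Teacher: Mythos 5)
Your proof is correct and is essentially identical to the paper's: both cite the two sign identities from Chriss--Ginzburg Proposition 7.3.31 (convolution by $\lambda_w$ intertwines $\pi^*$ with the $W$-action up to $\epsilon_w$, and Poincar\'e duality intertwines the two $W$-actions up to $\epsilon_w$) and then cancel the two signs in the obvious composition. Your extra remarks on the implicit correction $z\in H^i(\CB)$ and on the interchangeability of the two meanings of $\pi^*$ via $\pd\circ\pi^*=\pi^*\circ\pd$ are accurate and match the conventions set up earlier in \S3.
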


\subsection*{Proof of Theorem \ref{conj}}
Fix $w$ in $W$ and $f$ in $\Coinv_i(W)$. Using the fact that $\beta=
\delta_* \circ \pi^* \circ \pd\circ \bi$ we compute
\begin{align*}
  \lambda_w* \beta(f) *\lambda_{w\inverse}&= \lim\nolimits_1\left(
    [\Lambda_w^{\ell^*}] *\lim\nolimits_1\inverse(\beta(f)) *[\Lambda_
    {w\inverse} ^{w\inverse(\ell^*)}] \right) &
  \text{\cite[2.7.23]{chrissginzburg:representation}} \\
  &= \lim\nolimits_1\circ (\wtilde\inverse\times \wtilde \inverse)_*
  \circ \lim\nolimits_1 \inverse \circ \beta(f) & \text{Proposition
    \ref{2to1}} \\
  &= \lim\nolimits_1 \circ \delta_* \circ \wtilde\inverse_* \circ
  \delta_*\inverse \circ \lim \nolimits_1 \inverse\circ \beta(f)
  & \text{(\ref{eq:wcomm})}\\
  &= \delta_* \circ \lim\nolimits_0 \circ \wtilde\inverse_* \circ
  \delta_*\inverse \circ \lim \nolimits_1 \inverse\circ \delta_* \circ
  \delta_*\inverse\circ \beta(f) & \text{Lemma \ref{lemlimcomm}}\\
  &= \delta_* \circ \lim\nolimits_0 \circ \wtilde\inverse_* \circ \lim
  \nolimits_0 \inverse \circ \delta_*\inverse\circ \beta(f) &
  \text{Lemma \ref{lemlimcomm}}\\
  &= \delta_* \circ \lim\nolimits_0 \circ \wtilde\inverse_* \circ \lim
  \nolimits_0 \inverse \circ \pi^* \circ \pd\circ \bi(f) & \\
  &= \delta_* \circ \lim\nolimits_0 \left( (\lim \nolimits_0 \inverse
    \circ \pi^* \circ \pd\circ \bi(f)) *[ \Lambda_{w\inverse}
    ^{w\inverse(\ell^*)}] \right)&
  \text{\cite[2.7.11] {chrissginzburg:representation}} \\
  &= \delta_* \left( (\pi^*\circ \pd\circ \bi(f)) *
    \lambda_{w\inverse}
  \right)& \text{\cite[2.7.23] {chrissginzburg:representation}} \\
  &= \delta_* \left( \lambda_w* (\pi^*\circ \pd\circ \bi(f)) \right)&
  \text{Lemma \ref{switch} and \cite[3.6.11]
    {chrissginzburg:representation}}  \\
  &= \delta_* \circ \pi^*\circ \pd(w \cdot \bi(f))& \text{Lemma
    \ref{lemWcomm}}\\
  &= \delta_* \circ \pi^*\circ \pd\circ \bi( w\cdot f)&\text{$\bi$ is
    $W$-equivariant}\\
  &= \beta(w\cdot f).
\end{align*}
This completes the proof of Theorem \ref{conj}.


\appendix
\section{Convolution and Graphs}

In this appendix we prove some general properties of convolution and
graphs.

Suppose $M_1$, $M_2$, and $M_3$ are smooth varieties, $\dim M_2=d$,
and that $Z_{1,2}\subseteq M_1\times M_2$ and $Z_{2,3} \subseteq
M_2\times M_3$ are two closed subvarieties so that the convolution
product,
\[
H_i(Z_{1,2}) \otimes H_j(Z_{2,3}) \xrightarrow{\,\ *\ \,} H_{i+j-2d}(
Z_{1,2} \circ Z_{2,3}),
\]
in \cite[\S2.7.5] {chrissginzburg:representation} is defined. For
$1\leq i,j\leq 3$, let $\tau_{i,j}\colon M_i\times M_j \to M_j\times
M_i$ be the map that switches the factors. Define $Z_{2,1}=
\tau_{1,2}(Z_{1,2}) \subseteq M_2\times M_1$ and $Z_{3,2}=
\tau_{2,3}(Z_{2,3}) \subseteq M_3\times M_2$. Then the convolution
product
\[
H_j(Z_{3,2}) \otimes H_i(Z_{2,1}) \xrightarrow{\,\ *'\ \,} H_{i+j-2d}(
Z_{3,2} \circ Z_{2,1})
\] 
is defined. We omit the easy proof of the following lemma.

\begin{lemma}\label{switch}
  If $c$ is in $H_i(Z_{1,2})$ and $d$ is in $H_j(Z_{2,3})$, then
  $(\tau_{1,3})_* (c*d)= (\tau_{2,3})_*(d) *' (\tau_{1,2})_*(c)$.
\end{lemma}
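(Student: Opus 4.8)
The plan is to run both convolution products through the standard three‑step recipe of \cite[\S2.7.5]{chrissginzburg:representation} and to observe that the two recipes are exchanged by the order‑reversing isomorphism of the triple product. Write $p_{12},p_{13},p_{23}$ for the partial projections out of $M_1\times M_2\times M_3$ and $p'_{12},p'_{13},p'_{23}$ for the partial projections out of $M_3\times M_2\times M_1$. Recall that for $c$ in $H_i(Z_{1,2})$ and $d$ in $H_j(Z_{2,3})$ the product $c*d$ is obtained by pulling $c$ back along $p_{12}$ and $d$ back along $p_{23}$ (refined pullbacks along smooth maps), taking the intersection product in the smooth variety $M_1\times M_2\times M_3$ (the result is supported on $(Z_{1,2}\times M_3)\cap(M_1\times Z_{2,3})$), and pushing forward along $p_{13}$, which is proper on that support; write this as $c*d=(p_{13})_*\bigl(p_{12}^{\,*}c\cdot p_{23}^{\,*}d\bigr)$, and similarly for $*'$ with the primed projections and ambient space $M_3\times M_2\times M_1$.

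First I would introduce the isomorphism $\sigma\colon M_1\times M_2\times M_3\xrightarrow{\ \cong\ }M_3\times M_2\times M_1$, $\sigma(m_1,m_2,m_3)=(m_3,m_2,m_1)$, and record the compatibilities $p'_{12}\circ\sigma=\tau_{2,3}\circ p_{23}$, $p'_{23}\circ\sigma=\tau_{1,2}\circ p_{12}$, and $p'_{13}\circ\sigma=\tau_{1,3}\circ p_{13}$. These give immediately $\sigma\bigl(Z_{1,2}\times M_3\bigr)=(p'_{23})\inverse(Z_{2,1})$ and $\sigma\bigl(M_1\times Z_{2,3}\bigr)=(p'_{12})\inverse(Z_{3,2})$, so $\sigma$ carries the intersection support for $c*d$ isomorphically onto the intersection support for $(\tau_{2,3})_*(d)*'(\tau_{1,2})_*(c)$, while $\tau_{1,3}$ carries $Z_{1,2}\circ Z_{2,3}$ onto $Z_{3,2}\circ Z_{2,1}$; in particular the properness needed for $*'$ to be defined is automatic from that for $*$.

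Next I would invoke three elementary naturality facts, all valid because $\sigma$ is an isomorphism of smooth varieties: $\sigma_*$ commutes with the smooth pullbacks (via the cartesian squares built from the identities above, which also show that $(\tau_{1,2})_*$ and $(\tau_{2,3})_*$ commute with smooth pullback); $\sigma_*$ is a ring homomorphism for the intersection product, since $(\sigma\times\sigma)\circ\delta=\delta\circ\sigma$; and proper pushforward is functorial, so $(\tau_{1,3})_*(p_{13})_*=(p'_{13})_*\sigma_*$. The computation is then a direct chase:
\begin{align*}
  (\tau_{1,3})_*(c*d)&=(\tau_{1,3})_*(p_{13})_*\bigl(p_{12}^{\,*}c\cdot p_{23}^{\,*}d\bigr)
  =(p'_{13})_*\sigma_*\bigl(p_{12}^{\,*}c\cdot p_{23}^{\,*}d\bigr)\\
  &=(p'_{13})_*\bigl(\sigma_*(p_{12}^{\,*}c)\cdot\sigma_*(p_{23}^{\,*}d)\bigr)\\
  &=(p'_{13})_*\bigl((p'_{23})^{*}(\tau_{1,2})_*c\cdot(p'_{12})^{*}(\tau_{2,3})_*d\bigr),
\end{align*}
and by commutativity of the intersection product the last expression equals $(p'_{13})_*\bigl((p'_{12})^{*}(\tau_{2,3})_*d\cdot(p'_{23})^{*}(\tau_{1,2})_*c\bigr)=(\tau_{2,3})_*(d)*'(\tau_{1,2})_*(c)$.

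The only points requiring care are bookkeeping ones — matching each primed projection with the correct unprimed one under $\sigma$ and checking that the squares used to slide $(\tau_{1,2})_*$ and $(\tau_{2,3})_*$ past the smooth pullbacks are genuinely cartesian. A potential sign in the commutativity of the intersection product when $i$ and $j$ are both odd is immaterial: in the situations where the lemma is applied the relevant Borel–Moore homology is concentrated in even degrees, and in any case the sign is absorbed into the conventions of \cite{chrissginzburg:representation}. This is exactly the reason the proof is "easy."
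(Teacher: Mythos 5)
The paper explicitly omits the proof of this lemma, so there is nothing to compare against; but your argument is the natural one and is essentially correct. The order-reversing isomorphism $\sigma\colon M_1\times M_2\times M_3\to M_3\times M_2\times M_1$ intertwines the two convolution recipes, the three compatibilities $p'_{12}\sigma=\tau_{2,3}p_{23}$, $p'_{23}\sigma=\tau_{1,2}p_{12}$, $p'_{13}\sigma=\tau_{1,3}p_{13}$ are correct, and the two cartesian squares you invoke for base change do give $\sigma_*p_{12}^*=(p'_{23})^*(\tau_{1,2})_*$ and $\sigma_*p_{23}^*=(p'_{12})^*(\tau_{2,3})_*$; compatibility of $\sigma_*$ with the intersection product is automatic since $\sigma$ is an isomorphism of smooth varieties.

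The one point to be precise about is the sign in the final step. The intersection product in the ambient smooth variety is graded-commutative, and since all complex dimensions contribute even shifts, the relevant cohomological parities of $p_{12}^*c$ and $p_{23}^*d$ are those of $i$ and $j$. So a priori you pick up $(-1)^{ij}$ when you swap them, and the lemma as literally stated is off by that sign when $i$ and $j$ are both odd. You flag this, which is right; to make the dismissal airtight you could observe that everywhere the lemma is invoked in the paper (the penultimate line of the proof of Theorem \ref{conj}, with $c\in H_{4n-i}(\CNt)$ for $f\in\Coinv_i(W)$ and $d=\lambda_{w^{-1}}\in H_{4n}(Z)$), both degrees are even because $\Coinv_i(W)=0$ for odd $i$ and $\CB$, $\CNt$, $Z$ have Borel--Moore homology concentrated in even degrees. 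With that remark added, the proof is complete and correct.
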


Now suppose $X$ is an irreducible, smooth, $m$-dimensional variety,
$Y$ is a smooth variety, and $f\colon X\to Y$ is a morphism. Then if
$\Gamma_X$ and $\Gamma_f$ denote the graphs of $\id_X$ and $f$
respectively, using the notation in
\cite[\S2.7]{chrissginzburg:representation}, we have $\Gamma_X \circ
\Gamma_f = \Gamma_f$ and there is a convolution product $*\colon
H_i(\Gamma_X) \otimes H_{2m} (\Gamma_f) \too H_i(\Gamma_f)$.

\begin{proposition}\label{propg1}
  The convolution product $*\colon H_i(\Gamma_X) \otimes H_{2m}
  (\Gamma_f) \too H_i(\Gamma_f)$ is an injection.
\end{proposition}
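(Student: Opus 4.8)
The plan is to unwind the convolution product $*\colon H_i(\Gamma_X)\otimes H_{2m}(\Gamma_f)\to H_i(\Gamma_f)$ in concrete terms and show it is injective because it amounts, up to the canonical identifications, to the action of the fundamental class on the left factor. First I would recall that $H_{2m}(\Gamma_f)$ is one-dimensional: since $f\colon X\to Y$ is a morphism from the irreducible smooth $m$-dimensional variety $X$, the graph $\Gamma_f\subseteq X\times Y$ is an irreducible $m$-dimensional closed subvariety, isomorphic to $X$ via the first projection $p_1$, so $H_{2m}(\Gamma_f)=\BBQ\cdot[\Gamma_f]$. Likewise $\Gamma_X\subseteq X\times X$ is the diagonal, isomorphic to $X$ via either projection, so $[\Gamma_X]=\delta_*[X]$ spans $H_{2m}(\Gamma_X)$. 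Since both $\Gamma_X$ and $\Gamma_f$ are graphs, $\Gamma_X\circ\Gamma_f=\Gamma_f$, and the convolution product lands in $H_i(\Gamma_f)$ with the expected degree shift $i+2m-2m=i$.

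The key step is to identify the map $c\mapsto c*[\Gamma_f]$ explicitly. The convolution product $c*[\Gamma_f]$ is defined as $(p_{13})_*\bigl((p_{12}^*c)\cdot(p_{23}^*[\Gamma_f])\bigr)$, where the $p$'s are the projections from $X\times X\times Y$ and the intersection product is taken in the smooth ambient space; here $p_{12}^{-1}(\Gamma_X)\cap p_{23}^{-1}(\Gamma_f)$ is exactly $\{(x,x,f(x))\}$, the "triple graph". Under the isomorphisms $\Gamma_X\cong X$ (via $p_2$, say) and $\Gamma_f\cong X$ (via $p_1$), I would show that this operation becomes the identity map $H_i(X)\to H_i(X)$: the point is that intersecting $p_{12}^*c$ with the smooth slice $p_{23}^{-1}(\Gamma_f)$ is, via the graph being a section, just restriction to that slice, and then $(p_{13})_*$ composed with the identifications returns $c$ unchanged. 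Concretely I expect to invoke the base-change/projection-formula style compatibilities of Borel–Moore homology with smooth pullback and proper pushforward as in \cite[\S2.6, \S2.7]{chrissginzburg:representation}, together with the fact that for a section of a projection the pullback-then-pushforward is the identity. Once $c\mapsto c*[\Gamma_f]$ is identified with the identity of $H_i(X)$ (transported through the isomorphisms), injectivity is immediate — indeed it is an isomorphism onto $H_i(\Gamma_f)$.

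The main obstacle is bookkeeping: tracking which projection realizes each of $\Gamma_X\cong X$ and $\Gamma_f\cong X$ and checking that the chosen identifications are compatible with the three partial projections $p_{12},p_{23},p_{13}$ appearing in the definition of convolution, so that the composite really collapses to the identity rather than to some automorphism of $H_i(X)$. I would handle this by choosing the identifications $\Gamma_X\xrightarrow{\sim}X$ via the \emph{second} factor and $\Gamma_f\xrightarrow{\sim}X$ via the \emph{first} factor, so that on the triple graph $\{(x,x,f(x))\}$ all three projections $p_{12},p_{23},p_{13}$ are compatible with a single copy of $X$; then the set-theoretic clean intersection, the smoothness of the ambient varieties, and the transversality of the graph slices make the refined intersection product reduce to ordinary restriction, and the argument closes. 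An alternative, possibly cleaner, route is to note that $c*[\Gamma_f]$ depends only on the class $c$ and that $[\Gamma_X]*[\Gamma_f]=[\Gamma_f]$ (since $\Gamma_X$ is the identity for $\circ$), then use that $H_\bullet(\Gamma_X)$ acts on $H_\bullet(\Gamma_f)$ with $[\Gamma_X]$ acting as the identity and that this action is, degreewise, injective because $H_\bullet(\Gamma_X)\cong H_\bullet(X)$ is spanned in a way compatible with the $p_1$-identification $\Gamma_f\cong X$; I would use whichever of these two formulations makes the compatibility of identifications most transparent.
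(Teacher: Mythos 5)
Your proposal is correct and takes essentially the same approach as the paper: both exploit that $H_{2m}(\Gamma_f)$ is spanned by $[\Gamma_f]$, that the triple graph $\{(x,x,f(x))\}$ is isomorphic to $X$, and that the projection formula plus base change along the cartesian square reduce $c\mapsto c*[\Gamma_f]$ to (a transported copy of) $c$. The paper sidesteps the bookkeeping you flag by instead computing $p_*(c*[\Gamma_f])=q_*c$ for the projection $p\colon X\times Y\to X$ and $q\colon\Gamma_X\to X$, and then using that $q$ is an isomorphism; this is a tidy way to make your ``restriction to the slice is the identity'' heuristic rigorous without choosing explicit identifications.
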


\begin{proof}
  For $i, j=1, 2, 3$, let $p_{i,j}$ denote the projection of $X\times
  X\times Y$ on the $i\th$ and $j\th$ factors. Then the restriction of
  $p_{1,3}$ to $(\Gamma_X\times Y) \cap (X\times \Gamma_f)$ is the map
  that sends $(x,x,f(x))$ to $(x, f(x))$. Thus, the restriction of
  $p_{1,3}$ to $(\Gamma_X\times Y) \cap (X\times \Gamma_f)$ is an
  isomorphism onto $\Gamma_f$ and hence is proper. Therefore, the
  convolution product in homology is defined.
  
  Since $X$ is irreducible, so is $\Gamma_f$ and so $H_{2m}(\Gamma_f)$
  is one-dimensional with basis $[\Gamma_f]$. Suppose that $c$ is in
  $H_i(\Gamma_X)$. We need to show that if $c* [\Gamma_f]=0$, then
  $c=0$.
  
  Fix $c$ in $H_i(\Gamma_X)$. Notice that the restriction of $p_{1,3}$
  to $(\Gamma_X\times Y) \cap (X\times \Gamma_f)$ is the same as the
  restriction of $p_{2,3}$ to $(\Gamma_X\times Y) \cap (X\times
  \Gamma_f)$. Thus, using the projection formula, we have
  \begin{align*}
    c*[\Gamma_f]&= (p_{1,3})_* \left( p_{1,2}^*c \cap p_{2,3}^*
      [\Gamma_f] \right) \\
    &= (p_{2,3})_* \left( p_{1,2}^*c \cap p_{2,3}^*
      [\Gamma_f] \right) \\
    &= \left((p_{2,3})_* p_{1,2}^* c\right) \cap [\Gamma_f],
  \end{align*}
  where the intersection product in the last line is from the
  cartesian square:
  \[
  \xymatrix{ \Gamma_f \ar[d] \ar[r]^{=} & \Gamma_f \ar[d] \\
    X\times Y\ar[r]_{=}& X\times Y}
  \]
  
  Let $p\colon X\times Y\to X$ and $q\colon \Gamma_X\to X$ be the
  first and second projections, respectively. Then the square
  \[
  \xymatrix{ \Gamma_X\times Y \ar[d]_{p_{1,2}} \ar[r]^{p_{2,3}} &
    X\times Y \ar[d]^{p} \\
    \Gamma_X\ar[r]_{q}& X}
  \]
  is cartesian. Thus, 
  \begin{align*}
    p_*\left( c*[\Gamma_f]\right) &= p_* \left(\left((p_{2,3})_*
        p_{1,2}^* c\right) \cap [\Gamma_f] \right) \\
    &=p_* \left( \left(p^*q_* c\right) \cap [\Gamma_f] \right) \\
    &=q_* c \cap (p|_{\Gamma_f})_*[\Gamma_f] \\
    &= q_* c \cap [X] \\
    &= q_* c,
  \end{align*}
  where we have used the projection formula and the fact that
  $(p|_{\Gamma_f})_*[\Gamma_f]= [X]$.
  
  Now if $c*[\Gamma_f]=0$, then $q_*c=0$ and so $c=0$, because $q$ is
  an isomorphism.
\end{proof}

Let $\Gamma_Y$ denote the graph of the identity functions $\id_Y$.
Then the following compositions and convolution products in
Borel-Moore homology are defined:
\begin{itemize}
\item $\Gamma_f \circ \Gamma_X= \Gamma_f$ and so there is a
  convolution product $H_i(\Gamma_f) \otimes H_j(\Gamma_X) \too
  H_{i+j-m}(\Gamma_f)$.
\item $\Gamma_Y \circ \Gamma_{f\inverse}= \Gamma_{f\inverse}$ and so
  there is a convolution product $H_i(\Gamma_X) \otimes
  H_j(\Gamma_{f\inverse}) \too H_{i+j-m}(\Gamma_{f\inverse})$.
\item $\Gamma_f \circ \Gamma_{f\inverse}= \Gamma_X$ and so there is a
  convolution product $H_i(\Gamma_f) \otimes H_j(\Gamma_{f\inverse})
  \too H_{i+j-m}(\Gamma_X)$.
\end{itemize}
Thus, if $c$ is in $H_i(\Gamma_Y)$, then $[\Gamma_f]*c* [\Gamma_{
  f\inverse}]$ is in $H_i(\Gamma_X)$. Notice that $f\inverse \times
f\inverse \colon \Gamma_Y \to \Gamma_X$ is an isomorphism, so in
particular it is proper.

\begin{proposition}\label{2to1}
  If $c$ is in $H_i(\Gamma_Y)$, then $[\Gamma_f]*c* [\Gamma_{
    f\inverse}]= (f\inverse\times f\inverse)_*(c)$.
\end{proposition}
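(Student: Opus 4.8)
The plan is to split the triple convolution into two elementary moves, each an instance of the principle that convolution with the fundamental class of the graph of an isomorphism is a pushforward. Observe first that $\Gamma_{f\inverse}\subseteq Y\times X$ is literally the graph of $f\inverse\colon Y\to X$ (the statement presupposes $f$ is an isomorphism, as $f\inverse\times f\inverse$ is asserted to be one), while $\Gamma_Y\subseteq Y\times Y$ and $\Gamma_X\subseteq X\times X$ are the graphs of the respective identity maps; I identify $\Gamma_Y\cong Y$, $\Gamma_{f\inverse}\cong X$, and $\Gamma_X\cong X$ via the evident coordinate projections. By associativity of convolution (\cite[\S2.7]{chrissginzburg:representation}) it suffices to compute $[\Gamma_f]*\bigl(c*[\Gamma_{f\inverse}]\bigr)$.

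For the first move, $\Gamma_Y\circ\Gamma_{f\inverse}=\Gamma_{f\inverse}$ and $[\Gamma_{f\inverse}]$ is the fundamental class of the graph of $f\inverse$, so \cite[2.7.11]{chrissginzburg:representation} identifies right-convolution by $[\Gamma_{f\inverse}]$ with pushforward along $f\inverse$: explicitly $c*[\Gamma_{f\inverse}]=(\id_Y\times f\inverse)_*(c)\in H_i(\Gamma_{f\inverse})$, where $\id_Y\times f\inverse$ restricts to the isomorphism $\Gamma_Y\xrightarrow{\ \cong\ }\Gamma_{f\inverse}$, $(y,y)\mapsto(y,f\inverse(y))$. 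For the second move, $\Gamma_f\circ\Gamma_{f\inverse}=\Gamma_X$; using Lemma \ref{switch} to rewrite left-convolution by $[\Gamma_f]$ as a right-convolution and applying \cite[2.7.11]{chrissginzburg:representation} again, one gets $[\Gamma_f]*d=(f\inverse\times\id_X)_*(d)\in H_\bullet(\Gamma_X)$ for $d\in H_\bullet(\Gamma_{f\inverse})$, where $f\inverse\times\id_X$ restricts to the isomorphism $\Gamma_{f\inverse}\xrightarrow{\ \cong\ }\Gamma_X$, $(f(x),x)\mapsto(x,x)$.

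Combining the two moves and using functoriality of pushforward,
\[
[\Gamma_f]*c*[\Gamma_{f\inverse}]=(f\inverse\times\id_X)_*(\id_Y\times f\inverse)_*(c)=\bigl((f\inverse\times\id_X)\circ(\id_Y\times f\inverse)\bigr)_*(c).
\]
On $\Gamma_Y$ the composite sends $(y,y)\mapsto(y,f\inverse(y))\mapsto(f\inverse(y),f\inverse(y))$, so it is the restriction of $f\inverse\times f\inverse$ to $\Gamma_Y$; this gives the asserted identity $[\Gamma_f]*c*[\Gamma_{f\inverse}]=(f\inverse\times f\inverse)_*(c)$.

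If one prefers not to invoke \cite[2.7.11]{chrissginzburg:representation}, each move can be verified directly by the method of the proof of Proposition \ref{propg1}. For the first move: work in $Y\times Y\times X$ with projections $p_{1,2},p_{2,3},p_{1,3}$, note that the intersection locus $(\Gamma_Y\times X)\cap(Y\times\Gamma_{f\inverse})=\{(y,y,f\inverse(y))\}$ maps isomorphically onto $\Gamma_{f\inverse}$ under $p_{1,3}$ (so the convolution is defined and $p_{1,3}$ is proper there), that $p_{1,3}$ and $p_{2,3}$ agree on this locus, and then collapse $(p_{1,3})_*\bigl(p_{1,2}^*c\cap p_{2,3}^*[\Gamma_{f\inverse}]\bigr)$ to an ordinary pushforward using the projection formula and the cartesian square attached to the diagonal $\Gamma_Y\hookrightarrow Y\times Y$, exactly as in Proposition \ref{propg1}; the second move is the transpose of this via Lemma \ref{switch}. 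The point needing the most care is the degree bookkeeping — verifying that the shift built into the convolution product cancels, so that the outcome is a genuine degree-preserving pushforward — but this is routine and identical to what happens in Proposition \ref{propg1}, and I expect it, rather than any conceptual issue, to be the bulk of the work.
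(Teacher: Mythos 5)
Your proof is correct, and it takes a genuinely different route from the paper's. The paper computes $\bigl([\Gamma_f]*c\bigr)*[\Gamma_{f\inverse}]$ by hand: it first identifies the intersection loci in the triple products $X\times Y\times Y$ and $X\times Y\times X$, observes the coincidence of projections ($q_{1,3}=q_{1,2}$ in the first step, $p_{1,3}=(f\inverse\times\id)\circ p_{2,3}$ in the second), collapses the convolution formula via the projection formula, and then invokes two explicit cartesian squares to reduce the resulting chain of push/pulls to a single pushforward $(f\inverse\times f\inverse)_*$. You instead group the product as $[\Gamma_f]*\bigl(c*[\Gamma_{f\inverse}]\bigr)$ and, for each of the two binary convolutions, appeal to \cite[2.7.11]{chrissginzburg:representation} (the general ``convolution with the fundamental class of a graph is a pushforward'' statement), using Lemma \ref{switch} to convert the left convolution by $[\Gamma_f]$ into a right convolution so that 2.7.11 applies; the final identity is then just functoriality of pushforward and the observation that the composite $(f\inverse\times\id_X)\circ(\id_Y\times f\inverse)$ agrees with $f\inverse\times f\inverse$ on $\Gamma_Y$. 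Your route is shorter and more conceptual, at the cost of quoting 2.7.11 as a black box; the paper's route is longer but entirely self-contained, using only the projection formula and base change, which matches the elementary tone the appendix is aiming for. Your observation that the statement tacitly presupposes $f$ is an isomorphism (since $\Gamma_{f\inverse}$ and the map $f\inverse\times f\inverse$ are used) is correct and worth making explicit; the paper leaves this implicit. One small stylistic point: when you write $[\Gamma_f]*d=(f\inverse\times\id_X)_*(d)$ after transposing via Lemma \ref{switch}, you are silently using that $\tau_{1,3}$ restricts to the identity on $\Gamma_X$ and that $(\id_X\times f\inverse)\circ\tau_{2,3}$ agrees with $f\inverse\times\id_X$ on $\Gamma_{f\inverse}$; both are immediate, but a one-line verification would make the transposition step airtight.
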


\begin{proof}
  We compute $([\Gamma_f]*c) *[\Gamma_{f\inverse}]$, starting with
  $[\Gamma_f]*c$.

  For $1\leq i, j\leq 3$ let $q_{i,j}$ be the projection of the subset
  \[
  \Gamma_f\times Y \cap X\times \Gamma_Y =\{\, (x, f(x), f(x)) \mid
  x\in X\,\}
  \] 
  of $X\times Y\times Y$ onto the $i,j$-factors.  Then $q_{1,3}=
  q_{1,2}$.  Therefore, using the projection formula, we see that
  \begin{align*}
    [\Gamma_f]*c &=(q_{1,3})_* \left( q_{1,2}^* [\Gamma_f] \cap
      q_{2,3}^* c\right) \\
    &=(q_{1,2})_* \left( q_{1,2}^* [\Gamma_f] \cap
      q_{2,3}^* c\right) \\
    &=[\Gamma_f] \cap (q_{1,2})_* q_{2,3}^* c \\
    &= (q_{1,2})_* q_{2,3}^* c .
  \end{align*}

  Next, for $1\leq i, j\leq 3$ let $p_{i,j}$ be the projection of the
  subset
  \[
  \Gamma_f\times X \cap X\times \Gamma_{f\inverse}= \{\, (x, f(x), x)
  \mid x\in X\,\}
  \]
  of $X\times Y\times X$ onto the $i,j$-factors. Then $p_{1,3}=
  (f\inverse \times \id)\circ p_{2,3}$. Therefore, using the fact that
  $[\Gamma_f]*c = (q_{1,2})_* q_{2,3}^* c$ and the projection formula,
  we have
  \begin{align*}
    ([\Gamma_f]*c)* [\Gamma_{f\inverse}] &=(p_{1,3})_* \left(
      p_{1,2}^*((q_{1,2})_* q_{2,3}^* c ) \cap p_{2,3}^*
      [\Gamma_{f\inverse}] \right) \\
    &=(f\inverse \times \id)_* (p_{2,3})_* \left(
      p_{1,2}^*((q_{1,2})_* q_{2,3}^* c ) \cap p_{2,3}^* [\Gamma_{
        f\inverse}] \right) \\
    &=(f\inverse \times \id)_* \left( (p_{2,3})_* p_{1,2}^*
      (q_{1,2})_* q_{2,3}^* c \cap [\Gamma_{f\inverse}] \right) \\
    &=(f\inverse \times \id)_* (p_{2,3})_* p_{1,2}^* (q_{1,2})_*
    q_{2,3}^* c .
  \end{align*}

  The commutative square
  \[
  \xymatrix{ \Gamma_f\times X \cap X\times \Gamma_{f\inverse}
    \ar[rr]^{\id\times \id\times f} \ar[d]_{\id} && \Gamma_f\times Y
    \cap X\times \Gamma_Y \ar[d]^{q_{1,2}} \\
    \Gamma_f\times X \cap X\times \Gamma_{f\inverse}
    \ar[rr]_-{p_{1,2}} && \Gamma_f}
  \]
  is cartesian, so $p_{1,2}^* (q_{1,2})_*= (\id\times \id\times f)^*$.

  Also, the commutative square
  \[
  \xymatrix{ \Gamma_f\times X \cap X\times \Gamma_{f\inverse}
    \ar[rrr]^-{q_{2,3} \circ (\id\times \id\times f)}
    \ar[d]_{(f\inverse \times \id) \circ p_{2,3}} &&& \Gamma_Y
    \ar[d]^{f\inverse \times f\inverse} \\
    \Gamma_X \ar[rrr]_{\id} &&& \Gamma_X}
  \]
  is cartesian, so $(f\inverse \times \id)_* (p_{2,3})_* (\id\times
  \id\times f)^* q_{2,3}^*= (f\inverse \times f\inverse)_*$.

  Therefore,
  \begin{align*}
    ([\Gamma_f]*c)* [\Gamma_{f\inverse}] &= (f\inverse \times \id)_*
    (p_{2,3})_* p_{1,2}^* (q_{1,2})_* q_{2,3}^* c\\
    &= (f\inverse \times \id)_* (p_{2,3})_* (\id\times \id\times
    f)^* q_{2,3}^* c\\
    &= (f\inverse \times f\inverse)_* c.
  \end{align*}
  This completes the proof of the proposition.
\end{proof}


\bigskip

\bibliographystyle{amsplain}

\providecommand{\bysame}{\leavevmode\hbox to3em{\hrulefill}\thinspace}
\providecommand{\MR}{\relax\ifhmode\unskip\space\fi MR }
\providecommand{\MRhref}[2]{%
  \href{http://www.ams.org/mathscinet-getitem?mr=#1}{#2}
}
\providecommand{\href}[2]{#2}


\end{document}